\DeclareMathOperator*{\arginf}{arg\,inf}
\DeclareMathOperator{\TV}{TV} 
\DeclarePairedDelimiter\ceil{\lceil}{\rceil}
\def\mr{\mathrm}
\def\bx{\mathbf{x}}
\def\Dir{\text{Dirichlet}}
\def\Hist{\text{Histogram}}
\def\HistDens{\text{HistogramDensity}}
\def\Mult{\text{Multinomial}}
\def\Res{\text{Res}}
\def\Var{\text{Var}}
\def\Beta{\text{Beta}}
\def\split{\text{Split}}
\numberwithin{equation}{section}
\theoremstyle{plain}
\newtheorem{assumption}{Assumption}
\newtheorem{result}{Result}[section]
\newtheorem{lemma}{Lemma}
\newtheorem{definition}{Definition}
\title{Memory Efficient And Minimax Distribution Estimation Under Wasserstein Distance Using Bayesian Histograms}
\author[1,2]{Peter Matthew Jacobs}
\author[1,2]{Lekha Patel\footnote{Corresponding author: lpatel@sandia.gov.}}
\author[3]{Anirban Bhattacharya}
\author[3]{Debdeep Pati}
\affil[1]{Scientific Machine Learning, Sandia National Laboratories, Albuqueque, NM 87123, USA}
\affil[2]{Kahlert School of Computing, University of Utah, Salt Lake City, UT 84112, USA}
\affil[3]{Department of Statistics, Texas A\&M University, College Station, TX 77843, USA}
\date{\vspace{-1.5em}}                                           
\begin{document}
\maketitle

\begin{abstract}
    We study Bayesian histograms for distribution estimation on $[0,1]^d$ under the Wasserstein $W_v, 1 \leq v < \infty$ distance in the i.i.d sampling regime. We newly show that when $d < 2v$, histograms possess a special \textit{memory efficiency} property, whereby in reference to the sample size $n$, order $n^{d/2v}$ bins are needed to obtain minimax rate optimality. This result holds for the posterior mean histogram and with respect to posterior contraction: under the class of Borel probability measures and some classes of smooth densities. The attained memory footprint overcomes existing minimax optimal procedures by a polynomial factor in $n$; for example an $n^{1 - d/2v}$ factor reduction in the footprint when compared to the empirical measure, a minimax estimator in the Borel probability measure class. Additionally constructing both the posterior mean histogram and the posterior itself can be done super--linearly in $n$. Due to the popularity of the $W_1,W_2$ metrics and the coverage provided by the $d < 2v$ case, our results are of most practical interest in the $(d=1,v =1,2), (d=2,v=2), (d=3,v=2)$ settings and we provide simulations demonstrating the theory in several of these instances. 
\end{abstract}

\section{Introduction} \label{sec: intro}

The Wasserstein metric is a popular tool for comparing two distributions $\mu$ and $\nu$ defined on a common metric space $(E,\tilde{d})$. For $1 \leq v < \infty$, the Wasserstein distance $W_v$ is defined as 
\begin{equation}
    \label{wassersteinDef}
    W_v(\mu,\nu) := \left (\inf_{\pi \in \mathcal{M}(\mu,\nu)} \int \tilde{d}(x,y)^v \; \mr{d} \pi(x,y) \right )^{1/v},
\end{equation}
where $\mathcal{M}(\mu,\nu)$ is the set of couplings of $\mu$ and $\nu$; specifically the joint probability measures on $E \times E$ with marginals $\mu$ and $\nu$ respectively.
Some benefits of the using the Wasserstein metric include its sensitivity to the topology of the underlying space, ability to compare two measures  regardless of their levels of continuity, and its 1-dimension equivalent representation as the $L^v$ distance between quantile functions, which facilitates quantile function inference \citep{Zhang2020}. The Wasserstein metric is used in a variety of application settings, including a wealth of two and three dimensional problems inherent in image and video analysis \citep{rubner2000earth,sandler2011nonnegative,baumgartner2018visual,wu2021era}.

In this paper we study the problem of non--parametrically estimating a distribution on $[0,1]^d$ under the Wasserstein metric from $n$ independent and identically distributed (i.i.d) random variables $Y_1, \dots, Y_n$. As detailed in Section \ref{sec:priorWork}, this problem has recently received heightened attention. Within this framework, a particular focus lies on memory efficient estimation, which is  
critical in large data problems in which storing an $n$ atom measure, such as the \textit{empirical measure}, is not computationally feasible. Memory efficient distribution estimation under the Wasserstein distance is also important for its implications in time efficient and statistically accurate estimation of the Wasserstein distance itself. Two problems that illustrate this point are \textit{Minimum Wasserstein Distance Estimation (MWDE)} \citep{bernton2017inference,bernton2019parameter,bassetti2006minimum} and \textit{Approximate Bayesian Computation (ABC)} \citep{bernton2019approximate,legramanti2022concentration}. In MWDE, a parametric family of likelihoods $\mathcal{P}_{\theta} = \{f_\theta | \theta \in \Theta \}$ is to be fit to an unknown $P_0$, with the goal of selecting $\hat{\theta} = \arginf_{\theta \in \Theta} W_v(P_0,f_{\theta})$. After estimating $P_0$ with the empirical measure $\hat{P}_n := n^{-1}\sum_{i=1}^n \delta_{Y_i}$, where $\delta_a$ denotes a point mass at $a$, the actual optimization problem to be solved is $\hat{\theta} = \arginf_{\theta \in \Theta} W_v(\hat{P}_n,f_{\theta})$. However, as is discussed in \cite{bernton2017inference}, the Wasserstein distance $W_v(\hat{P}_n,f_{\theta})$ may frequently be computationally or analytically intractable for the parametric family under consideration. Instead, a common strategy is to minimize $k^{-1} \sum_{t=1}^{k} W_v(\hat{P}_n,\hat{f_{\theta}}_{m,t})$ over $\theta \in \Theta$ where $\hat{f_{\theta}}_{m,1},\dots,\hat{f_{\theta}}_{m,k}$ are $m$ sample empirical measures derived from samples of $f_{\theta}$ and $k$ is an integer chosen sufficiently large.  Optimization algorithms used in this setting (such as Nelder--Mead, used in \cite{bernton2019approximate}) rely on many Wasserstein distance evaluations. Existing procedures for exact or approximate \citep{cuturi2013sinkhorn,gottschlich2014shortlist,altschuler2017near,luise2018differential,peyre2019computational,chizat2020faster} computation of Wasserstein distance between discrete distributions have a runtime that depends polynomially on the number of atoms in the measures being compared. For example, in the $d \geq 2$ setting when $n = m$, the worst case runtime of exact Linear Programming based solvers is of order $n^3 \log(n)$ \citep[see][Section~2.1]{pele2009fast} while for the Sinkhorn solver it is of order nearly $n^2$ \citep[see][Chapter~4]{peyre2019computational}. Thus, finding alternatives to $\hat{P}_n$ (and $\hat{f}_{{\theta}_{m,t}}$) that have a smaller memory footprint but are still  estimators for $P_0$ (and $f_{\theta}$) with statistical quality, is important. Similarly, in Wasserstein ABC, approximate samples from a posterior distribution are generated by randomly sampling a $\theta$ from the prior distribution, generating an $m$ sample synthetic dataset (and associated empirical measure ${\hat{f}_{\theta}}_m$) from $f_\theta$, the likelihood associated with $\theta$, and then accepting $\theta$ as a posterior sample when $W_v(\hat{P}_n,\hat{f_{\theta}}_m)$ is sufficiently small. Here, having a low atom substitute for $\hat{P}_n$ (and for $\hat{f_{\theta}}_m$) is computationally important due to the vast number of $W_v$ computations that are required to be performed.

Conversely, there is also a need to supply practitioners with methods for assessing the uncertainty in distribution estimation. Although there are various ways of performing uncertainty quantification for non--parametric problems (see \cite{mcdonald2021review}), a measure of uncertainty given by a Bayesian approach is attractive as it is non-asymptotic and is a statement conditional on the observed data. A widely accepted way \citep{ghosal2017fundamentals} for assessing a Bayesian method in a frequentist sense is to find conditions under which a posterior assigns vanishingly small mass outside a shrinking ball around the truth. If the minimum radius of this shrinking ball (called the posterior contraction rate) matches with the classical minimax rate associated with the problem, we say the Bayesian method is agnostic to the prior choice in the presence of an infinite amount of data. Such a result provides some comfort to the practitioner about the possible effects of the prior choice in the underlying inference. Moreover, point estimates obtained as appropriate summaries of an optimally contracting posterior automatically provide minimax optimal frequentist estimators.
The work of \cite{ghosal2000} provides a general three condition strategy for proving these posterior contraction rates, yet may be more difficult to use for the Wasserstein metric. This occurs since $W_v, v \geq 2$ is not dominated by Total Variation (TV) or Hellinger distances, and because for $W_1$ (at least when considering classes of probability measures with densities that are H\"{o}lder smooth of regularity $0 < s \leq 1$), the minimax rate of convergence under the Kullback–Leibler (KL) divergence is slower than under $W_1$. These challenges generally make the task of satisfying the test construction and prior thickness conditions, while retaining the minimax rate, difficult.

In this paper, we consider the $([0,1]^d,\| \cdot - \cdot \|_p)_{d \in \mathbb{N}, 1 \leq p < \infty}$ metric space, where $\| \bx \|_p : = (\sum_{i=1}^n x_i^p)^{1/p}$ denotes the $L^p$ norm. On this space, we show that the Bayesian histogram model yields minimax optimal procedures for estimating distributions under the Wasserstein distance, under the class of Borel probability measures and some classes of smooth densities. Specifically, we study both the posterior mean histogram sequence as well as the rate of posterior contraction. Our main results show that when $d < 2v$, the posterior mean histogram and posterior distribution require storing only order $n^{d/2v}$ atoms (in reference to the sample size $n$) for minimax optimality to be achieved. By succinctly leveraging conjugacy in the Bayesian histogram, we demonstrate an alternative approach to the \cite{ghosal2000} method for proving a posterior contraction rate under Wasserstein distances. 

An important reason for studying the Bayesian histogram (as opposed to only the frequentist one) is that the prior can ensure Wasserstein distance computations using the histogram as an estimate for $P_0$ are possible. Specifically, semi-discrete Optimal Transport algorithms \citep{kitagawa2019convergence,merigot2011multiscale} are used to compute the Wasserstein distance between a discrete distribution and a density. The correctness of common algorithms in this space rely on the support of the density being connected. When $P_0$ is estimated with the exact posterior mean histogram density we present in this paper, a non-trivial prior will ensure the support of the estimator for $P_0$ is the entire domain, thereby facilitating Wasserstein distance calculations against discrete distributions. We revisit this point and discuss in more detail the different options the practitioner has for using the histogram, or a discretization of the histogram, as an estimator for $P_0$ in computation of Wasserstein distances in Section \ref{sec:memEff}.


The paper is organized as follows. Section \ref{sec:priorWork} provides a summary of prior work on similar problems. Section \ref{sec:notation} describes notational conventions. Section \ref{sec:theory} details the Bayesian histogram model and states the main theorems on rates of convergence and discusses these results. Section \ref{sec:memEff} discusses applications of the theory in more detail. Section \ref{sec:sims} provides simulations to demonstrate our theoretical results. Finally, Section \ref{sec:conclusion} provides some concluding remarks. Proofs of the main theorems, including intermediate results, can be found in the Appendix.

\subsection{Prior work}
\label{sec:priorWork}

The frequentist convergence rates of the empirical measure under the expected Wasserstein distance are studied in \cite{Fournier2015,singh2018minimax, bobkov2019one,weed2019sharp} to varying degrees of generality. A consequence of the work of \cite{singh2018minimax} is that on the metric space $([0,1]^d,\| \cdot - \cdot \|_p)$, for $d \in \mathbb{N}, 1 \leq p < \infty$, for the class of Borel probability measures, the empirical measure is minimax optimal (at least up to logarithmic terms) for every $v \geq 1$. Further, the minimax rate is lower bounded by $n^{-1/2v}$ for $d \leq 2v$, and $n^{-1/d}$ for $d  > 2v$.

\cite{niles2022minimax} study minimax rates for the Wasserstein distance under classes of smooth densities. Specifically, for $s,\tilde{L} >0, 1 \leq p',q \leq \infty$, and letting
\begin{equation}
    \label{besovSpaceDefinition}
    B_{p',q}^{s}(\tilde{L}) = \left\{f \in L_{p'}([0,1]^d): \| f - \pmb{1} \|_{B_{p',q}^s} \leq \tilde{L}, \int f =1, f \geq 0 \right\},
\end{equation}
where $B_{p',q}^{s}$ is the Besov norm, \citet[~Theorem 5]{niles2022minimax} show that for $p', q, v \geq 1$, $s > 0$ and some $\tilde{L}$ sufficiently large, the minimax lower bound follows 
\begin{equation}
\label{smoothnessClass}
\inf_{\tilde{\mu}} \sup_{f \in B^{s}_{p',q}(\tilde{L})} W_v(f, \tilde{\mu}) \gtrsim \begin{cases} n^{-\frac{1+\frac{s}{v}}{d+s}} & d - s \geq 2v \\ n^{-\frac{1}{2v}} & d - s < 2v, \end{cases} 
\end{equation}
where infimum is over estimators $\tilde{\mu}$ based on $n$ observations and $\gtrsim$ is in reference to $n$ (specifically constants are allowed to depend on quantities not depending on $n$).
In particular, we will use this result to argue that the Bayesian memory efficient histogram studied in this paper retains minimax optimality in the class $B^{s}_{p',q}$ for $p', q, v \geq 1$, provided $d < 2v$ for any $s > 0$. In the H\"{o}lder class $\mathcal{C}^{s} = B_{p' = \infty, q = \infty}^{s}$ \citet[~Theorem 6]{niles2022minimax} introduce a histogram estimator that is minimax optimal up to logarithmic terms when $v \geq 2$ and $0 < s < 1$. While this estimator is defined using the Haar wavelet basis, we prove in Appendix Section \ref{app:nwbConnect} that this histogram belongs to the dyadic histogram class discussed in this paper, but with Dirichlet weights of zero prior concentration and where there are order $n^{d/(d+s)}$ bins total. While in the \hbox{$d < 2v$} case construction of the Bayesian memory efficient histogram does not depend on the regularity \hbox{$s$} and still the minimax lower bound of Equation \ref{smoothnessClass} is achieved, the construction of the histogram in \cite{niles2022minimax} depends on knowledge of the regularity, which in practice may not be available. Moreover, in the most practically important $d < 2v$ cases, $(d=1,v =1,2), (d=2,v=2), (d=3,v=2)$, $n^{d/2v}$ is a polynomial factor smaller than $ n^{d/(d+s)}$ for $s < 1$. Therefore, even if the regularity is known prior to data collection, the memory footprint of the histogram constructed in \cite{niles2022minimax} never outperforms that of the Bayesian histogram studied here. Finally, we note that the proofs of minimax optimality for the estimators presented in this paper are not restricted to the H\"{o}lder classes.

The frequentist and posterior contraction proofs that we show rely on the \textit{multiresolution} upper bound for the Wasserstein distance. This follows several other works such as \cite{Fournier2015, weed2019sharp, singh2018minimax, niles2022minimax}, that utilize a version of the multiresolution upper bound on the Wasserstein distance.

Regarding posterior contraction rates for Bayesian histograms, \cite{scricciolo2007rates} uses the standard \cite{ghosal2000} strategy to provide minimax optimal posterior contraction for the class of $0 < s \leq 1$ H\"{o}lder continuous densities on $[0,1]$ and $[0,1]^2$, under the Total Variation ($\TV$) and Hellinger metrics. Moreover, \cite{castillo2014bernstein} demonstrate that asymptotically, posterior probabilities of Kolmogorov-Smirnov balls around $\hat{P}_n$ agree with frequentist ones when estimating $s$ H\"{o}lder continuous densities on $[0,1]$ where $1/2 < s \leq 1$. For metrics that are not dominated by $\TV$ or Hellinger, such as for $W_v, v \geq 2$, explicit test construction is necessary within the realm of the \cite{ghosal2000} strategy. In non-histogram models and for non-Wasserstein metrics, explicit test construction is carried out in \cite{pati2014posterior} and \cite{gine2011rates}. In the problem of inferring the mixing measure of a mixture density, posterior contraction rates under the Wasserstein distance have been studied. Specifically \cite{nguyen2013convergence} and \cite{gao2016posterior} upper bound powers of the Wasserstein distance between mixing measures by the Hellinger distance between the respective mixing densities. \cite{gao2016posterior} develops a generalization of the three condition theorem of \cite{ghosal2000} to prove posterior contraction for the powered Wasserstein measurement (a non-distance) between distributions, which then implies a rate of posterior contraction under Wasserstein distance.

In parametric problems using Euclidean distance to quantify loss, leveraging conjugacy where possible, is a common strategy for analyzing the posterior \citep{van2014horseshoe}. In the non-parametric setting, conjugacy has also been exploited to study posterior asymptotics, with examples including inferring the mean function in a regression employing a Gaussian process prior on the mean \citep{knapik2011bayesian,yang2017frequentist}.
\section{Notation and definitions}
\label{sec:notation}
We consider the metric space $([0,1)^d,\| \cdot - \cdot \|_{p})$ where $d \in \mathbb{N}$ and $1 \leq p < \infty$. Recall that for $p \geq 1$, if $\pmb{x} = (x_1,\dots,x_d),\pmb{y} = (y_1,\dots,y_d) \in [0,1)^d$, then
\begin{equation}
    \|\pmb{x} - \pmb{y}\|_{p} := \left( \sum_{i=1}^{d} |x_i-y_i|^p \right)^{1/p}.
\end{equation}
The corresponding space of probability measures considered is  
\begin{equation}
    \mathcal{P}_d := \{ \text{Borel Probability Measures on } [0,1)^d \}. \label{def: prob space}
\end{equation}
Excluding the right end points are a notational convenience but extension of the arguments that follow to include the right endpoint is trivial. For $v \in [1,\infty)$, we study the Wasserstein-$v$ distance between two probability measures $\mu,\nu \in \mathcal{P}_d$ where $\tilde{d}$ of Equation \ref{wassersteinDef} satisfies $\tilde{d} = \| \cdot - \cdot \|_p$.
Note that $p$ is suppressed in the notation $W_v(\mu,\nu)$. 

To address the notational conventions that follow in this paper, we first note that the empirical measure based on $n$ i.i.d samples $Y_1,\dots,Y_n$ is denoted $\hat{P}_n$, with $\hat{P}_n = n^{-1} \sum_{i=1}^n \delta_{Y_i}$. $\log$ without a base explicitly given refers to the natural logarithm. $a_n \lesssim r_n$ means that there exists a $C >0$ not depending on $n$ and $N \in \mathbb{N}$ such that for $n \geq N$, $a_n \leq C r_n$. $C$ may depend on $d$ in this work and we view this as reasonable because the memory efficiency gains occur when $d < 2v$ and $v=1,2$ are most often used in practice. Further $a_n \lesssim r_n$ and $r_n \lesssim a_n$ is denoted $a_n \asymp r_n$. Finally, note that $i.p$ means \textit{in probability}, and $\mathbb{I}(\cdot)$ denotes the standard indicator function.

For $b,d \in \mathbb{N}$, we denote $[b] := \{1,2,\dots,b\}$ and $[b]^d := \prod_{j=1}^{d} [b]$.
For $j \in \mathbb{N}$, $\mathcal{S}^{j-1}$ refers to the $(j-1)$ dimensional probability simplex. That is $\mathcal{S}^{j-1} := \{ (x_1,\dots,x_j) \in \mathbb{R}^{j}: \sum_{t=1}^{j} x_t =1, x_t \geq 0 \text{ for } t \in (1,2,\dots,j) \}$. Also note that $\mathbb{R}_{+} := \{ x \in \mathbb{R}: x > 0 \}$ and for $z \in \mathbb{N}$ and $\pmb{\alpha} \in \mathbb{R}_{+}^{z}$, the Dirichlet probability measure $\Dir: \mathcal{B}(\mathcal{S}^{z-1}) \to [0,1]$ is given by 
\begin{equation}
    \Dir(G|\pmb{\alpha}) = \frac{1}{B(\pmb{\alpha})} \int_{G} \prod_{i=1}^{z} x_i^{\alpha_i -1} \mr{d} \pmb{x},
\end{equation}
where $B$ is the $z$ dimensional Beta function and where $\mathcal{B}(\mathcal{S}^{z-1})$ is the Borel measurable subsets of $\mathcal{S}^{z-1}$ and $G \in \mathcal{B}(\mathcal{S}^{z-1})$. For $b \in \mathbb{N}$ and a multi-index $\pmb{i} = (i_1,i_2,\dots,i_d) \in [b]^d$, define
\begin{equation}
\label{AsetDefinition}
A_{\pmb{i},b} := \left[\frac{i_1-1}{b},\frac{i_1}{b} \right) \times \left[\frac{i_2-1}{b},\frac{i_2}{b}\right) \times \dots, \times \left[\frac{i_d-1}{b},\frac{i_d}{b}\right).
\end{equation}
Clearly, $\{A_{\pmb{i},b}\}_{\pmb{i} \in [b]^d}$ form a partition of $[0, 1)^d$.
For a vector of weights $\pmb{\pi} = \{ \pi_{\pmb{j}} \}_{\pmb{j} \in [b]^d} \in \mathcal{S}^{bd-1}$, the $d$ dimensional Histogram probability measure $\Hist: \mathcal{B}([0,1)^d) \to [0,1]$ is a weighted mixture of uniform distributions on the partition sets $A_{\pmb{i},b}$, defined by 
\begin{equation}
\Hist(G|\pmb{\pi},b) := \int_{G} \sum_{\pmb{i} \in [b]^d} b^{d} \pi_{\pmb{i}} \mathbb{I}(\pmb{y} \in A_{\pmb{i},b}) \mr{d} \pmb{y},
\end{equation}
where $\mathcal{B}([0,1)^d)$ is the Borel measurable subsets of $[0,1)^d$ and $G \in \mathcal{B}([0,1)^d)$.
\section{d-dimensional Bayes histogram}
\label{sec:theory}
In this section, we define the \hbox{$d$}-dimensional Bayesian histogram, present the main theorems giving upper bounds on expected loss under Wasserstein distances for the posterior mean histogram and a rate of contraction for the posterior itself, and discuss the main technical tools used in the proofs. Discussion of the main results concludes this section.
\subsection{Setup}
 We suppose $Y_1,Y_2,\dots,Y_n,\dots \overset{iid}{\sim} P_0$ where $P_0 \in \mathcal{P}_d$.
For $b \in \mathbb{N}$, let $\pmb{\alpha}_b := \{ \alpha_{\pmb{j},b} \}_{\pmb{j} \in [b]^d} \in \mathbb{R}_+^{bd}$. 
For an increasing sequence $k_n$, let $b_n := 2^{\ceil{\log_2(k_n)}}$, $\pmb{\pi}_n := \{ \pi_{n,\pmb{j}} \}_{\pmb{j} \in [b_n]^d} \in \mathcal{S}^{b_n d-1}$. For $n \in \mathbb{N}$, the Bayesian Histogram model likelihood and prior are given by

\begin{align}
\label{modelDefinition}
Y_1,\dots,Y_n | \pmb{\pi}_n \overset{i.i.d}{\sim} \Hist(\cdot|\pmb{\pi}_n,b_n) & \qquad &
\pmb{\pi}_n | \pmb{\alpha}_{b_n} \sim \Dir(\cdot | \pmb{\alpha}_{b_n}).
\end{align}
Also, let $z_n^{*}(\cdot | Y_1,\dots,Y_n)$ refer to the posterior probability measure over $\mathcal{S}^{b_n d-1}$ derived from Equation \ref{modelDefinition}. 

As $\alpha_{\pmb{i},b_n} >0$ for every $\pmb{i} \in [b_n]^d$ and for every $n \in \mathbb{N}$, Equation \ref{modelDefinition} induces a sequence of posterior distributions over $\mathcal{P}_d$. Specifically let $\psi_b: \mathcal{S}^{bd-1} \to \mathcal{P}_d$ be the map that takes a given $\pmb{\pi} = \{\pi_{\pmb{j}} \}_{\pmb{j} \in [b]^d }$ and produces its corresponding histogram probability measure. That is 
\begin{equation}
    \psi_b(\pmb{\pi}) = \Hist(\cdot|\pmb{\pi},b).
\end{equation}
For a measurable set $B \subseteq \mathcal{P}_d$, the posterior measure $\Pi_n$ is 
\begin{equation}
\label{preImageForm}
\Pi_n(B|Y_1,\dots,Y_n) = z_n^{*}(\psi_{b_n}^{-1}(B)|Y_1,\dots,Y_n). 
\end{equation}

Due to conjugacy, it is straightforward to show that $$z_n^*(\cdot|Y_1,\dots,Y_n) = \Dir(\cdot| \pmb{\alpha_{k_n}^{*}}),$$ where for $\pmb{i} \in [b_n]^d$
\begin{equation}
\alpha_{\pmb{i},b_n}^{*}= \alpha_{\pmb{i},b_n} + \sum_{j=1}^{n} \mathbb{I}(Y_j \in A_{\pmb{i},b_n}).
\end{equation}
Now allowing $\pmb{\alpha}_{b_n} \in \{x \in \mathbb{R}: x \geq 0\}^{b_n d}$, we define the sequence of estimators for $P_0$, denoted $\bar{P}_n$, by
\begin{equation}
\label{posteriorMeanHistogram}
\bar{P}_n := \psi_{b_n}\left\{\left(\frac{\alpha_{\pmb{i},b_n}^{*}}{\sum_{\pmb{j} \in [b_n]^d} \alpha_{\pmb{j},b_n}^{*}}\right)_{\pmb{i} \in [b_n]^d} \right\} = \psi_{b_n}\{ ( E_{z_n^*}(\pi_{\pmb{i}} | Y_1,\dots,Y_n) )_{\pmb{i} \in [b_n]^d} \},
\end{equation}
where the second equality above holds if $\alpha_{\pmb{i},b_n} >0$ for $\pmb{i} \in [b_n]^d$, and $\bar{p}_n$ denotes the density associated with $\bar{P}_n$. 

We note that posterior distributions derived from improper prior distributions are not considered in this work, and therefore to consider the posterior measure sequence $\Pi_n$ we require that $\alpha_{\pmb{i},b_n} >0$ for $\pmb{i} \in [b_n]^d$. However, we allow $\bar{P}_n$ to be defined regardless of whether or not the prior distribution over the simplex is proper. In particular, it is still defined in the event that some or all of the $\alpha_{\pmb{i},b_n}$ parameters are zero. When the prior distribution is proper, $\bar{P}_n$ has an additional interpretation: it is the posterior mean histogram. 
In the lemmas and theorems that follow that involve analysis of the posterior distribution sequence $\Pi_n$, we make clear that we require $\alpha_{\pmb{i},b_n} >0$ for $\pmb{i} \in [b_n]^d$ and $n \in \mathbb{N}$.

$\bar{P}_n$ (and $\Pi_n$) refer to an entire class of point estimators (and posterior distributions) indexed by the parameter sequences $k_n$ and $\pmb{\alpha}_{b_n}$. In the subsequent subsection we establish constraints on $k_n$ and $\pmb{\alpha}_{b_n}$ that ensure $\bar{P}_n$ and $\Pi_n$ are minimax statistical procedures in certain general distribution classes while still maintaining memory efficiency in the $d < 2v$ case. From herein, denote $K_n := \ceil{\log_2(k_n)}$, so that $b_n = 2^{\ceil{\log_2(k_n)}} = 2^{K_n}$.

\subsection{Analysis}
All results that follow will apply for every $p \geq 1$. Statements are asymptotic in $n$ but \textit{not} in $d$. The scaling factors $C(d,v)$ that we define appearing in the posterior contraction theorems, simply make the role of $d$ and $v$ explicit. 

Our analysis does not impose any smoothness assumption on $P_0$. We study both the rate at which the expected Wasserstein distance between $P_0$ and $\bar{P}_n$ decays, as well as the contraction rate of $\Pi_n$ under Wasserstein neighborhoods of $P_0$.   According to \cite{singh2018minimax},
\begin{equation}
    \label{minimaxBorelProbMeasures}
    \inf_{\Tilde{P}} \sup_{P_0 \in \mathcal{P}} \mathbb{E}_{P_0} W_v(\Tilde{P},P_0) \gtrsim \begin{cases}
n^{-\frac{1}{2v}} & d \leq 2v, \\
n^{-\frac{1}{d}} & d > 2v,
\end{cases}
\end{equation}
where the $\inf$ is taken over all estimators $\tilde{P}$ from $n$ observations. Thus these are the rates we aim for (and achieve at least up to logarithmic terms) in the subsequent theorems. Theorem \ref{frequentistDwasser} concerns $\mathbb{E}_{P_0} W_v(P_0,\bar{P}_n)$, while Theorem \ref{contractionAroundCentralEstimatorDwasser} establishes posterior contraction around $\bar{P}_n$. Theorems \ref{frequentistDwasser} and \ref{contractionAroundCentralEstimatorDwasser} are used to prove Theorem \ref{concludingTheoremDwasser}, which establishes posterior contraction around $P_0$. The proofs are given in Appendix Section \ref{app:mainTheoremsProofs}. After presenting the main theorems we mention the technical tools used to prove them.

Before stating the theorems, we define the following assumptions for $d \in \mathbb{N}$ and $v \geq 1$.

\begin{assumption}
For $n \in \mathbb{N}$
\label{knDef}
\[k_n = \begin{cases} n^{1/2v} & d \leq 2v, \\
                        n^{1/d} & d > 2v,
            \end{cases}
\]
\end{assumption}
and
\begin{assumption}
\label{alphaUpperBound}
    \[ \sum_{\pmb{j} \in [b_n]^d} \alpha_{\pmb{j},b_n} \lesssim 
    \begin{cases}
        n^{1/2} & d \leq 2v. \\
        n^{1-\frac{v}{d}} & d > 2v.
    \end{cases}
    \]
\end{assumption}

We are now ready to state the main theorems. First regarding $\mathbb{E}_{P_0} W_v(P_0,\bar{P}_n)$, we have the following result.

\begin{restatable}{theorem}{frequentistDwasser}
\label{frequentistDwasser}
Let $Y_1, \dots, Y_n \overset{iid}{\sim} P_0 \in \mathcal{P}_d$.
Suppose $k_n$ satisfies Assumption \ref{knDef}, $\pmb{\alpha}_{b_n}$ satisfies Assumption \ref{alphaUpperBound} and that for $n \in \mathbb{N}$ and $\pmb{j} \in [b_n]^d$, $\alpha_{\pmb{j},b_n} \geq 0$.
Then
\[
\mathbb{E}_{P_0} W_v(P_0,\bar{P}_n) \lesssim \begin{cases}
n^{-\frac{1}{2v}} & d < 2v. \\
n^{-\frac{1}{2v}} \log^{\frac{1}{v}}(n) & d = 2v. \\
n^{-\frac{1}{d}} & d > 2v.
\end{cases}
\]
\end{restatable}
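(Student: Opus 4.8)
The plan is to control $W_v(P_0,\bar P_n)$ via the multiresolution (dyadic) upper bound on the Wasserstein distance, which decomposes the distance into contributions from each resolution level $0,1,\dots,K_n$ of the dyadic partition of $[0,1)^d$. Writing $\bar P_n = \Hist(\cdot\,|\,\bar{\pmb\pi}_n,b_n)$ with $b_n = 2^{K_n}$, the key observation is that $\bar P_n$ and $P_0$ agree after coarsening to any resolution only up to the estimation error of the bin masses, and that below the finest resolution $K_n$ the histogram is exactly uniform on each cell so it matches $P_0$ coarsened to that resolution in the sense needed for the bound. Concretely, I would invoke a lemma of the form
\[
W_v(P_0,\bar P_n)^v \;\lesssim\; \sum_{\ell=0}^{K_n} 2^{-\ell v} \, 2^{\ell d}\!\!\sum_{\pmb i \in [2^\ell]^d} \bigl| \bar P_n(A_{\pmb i,2^\ell}) - P_0(A_{\pmb i,2^\ell})\bigr|^{?}
\]
(plus a residual term at level $K_n$ capturing the within-cell discrepancy, which for $\bar P_n$ is exactly zero since $\bar P_n$ is uniform within finest cells — this is the point where the histogram structure is used), with the precise exponents and weights being whatever form of the multiresolution bound the paper has recorded in the appendix. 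I would then take $\mathbb{E}_{P_0}$ and bound each level's contribution.

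The second step is the bias–variance split for $\mathbb{E}_{P_0}\bigl|\bar P_n(A) - P_0(A)\bigr|$ on a dyadic cell $A$ at level $\ell \le K_n$. Because $\bar P_n(A) = \bigl(\sum_{\pmb j: A_{\pmb j,b_n}\subseteq A}\alpha^*_{\pmb j,b_n}\bigr)/\bigl(\sum_{\pmb j\in[b_n]^d}\alpha^*_{\pmb j,b_n}\bigr)$ and $\sum_{\pmb j}\alpha^*_{\pmb j,b_n} = n + \sum_{\pmb j}\alpha_{\pmb j,b_n}$, Assumption \ref{alphaUpperBound} forces the denominator to be $n(1+o(1))$. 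Hence $\bar P_n(A)$ differs from the empirical frequency $\hat P_n(A)$ by a deterministic additive bias of order $\bigl(\sum_{\pmb j\in A}\alpha_{\pmb j,b_n} + (\text{stuff})\bigr)/n$, whose total mass is controlled again by Assumption \ref{alphaUpperBound}, plus the usual empirical fluctuation $\mathbb{E}|\hat P_n(A)-P_0(A)| \le \sqrt{P_0(A)(1-P_0(A))/n} \le \sqrt{P_0(A)/n}$. Summing $\sqrt{P_0(A_{\pmb i,2^\ell})/n}$ over $\pmb i\in[2^\ell]^d$ using Cauchy–Schwarz gives $\sqrt{2^{\ell d}/n}$; plugging into the multiresolution sum produces $\sum_{\ell=0}^{K_n} 2^{-\ell} (2^{\ell d}/n)^{1/2}$ (after the $1/v$-th power bookkeeping), which is a geometric-type series in $2^{\ell(d/2 - 1)}$.

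The third step is just summing the series against the three regimes. When $d<2v$ the choice $k_n = n^{1/2v}$, so $K_n \asymp \tfrac{1}{2v}\log_2 n$ and $2^{K_n}\asymp n^{1/2v}$; the dominant level is $\ell=K_n$ and the geometric sum is dominated by its last term, yielding the bias term of order $n^{-1/2}\cdot\text{(mass bound)}$ and the fluctuation term of order $2^{-K_n}\sqrt{2^{K_n d}/n} = n^{-1/2v}\cdot n^{-1/2+ d/(4v)} \cdot(\dots)$ — the arithmetic collapses to $n^{-1/2v}$ exactly in the $d<2v$ regime, to $n^{-1/2v}\log^{1/v} n$ when $d=2v$ (the extra log appearing because the geometric ratio degenerates to $1$ and there are $\asymp \log n$ equal-sized levels), and the $d>2v$ case is handled the same way with $k_n=n^{1/d}$, where now the coarse levels dominate and the sum truncates at level $K_n\asymp\tfrac1d\log_2 n$ giving $n^{-1/d}$; I would also need to separately bound the within-cell residual at level $K_n$ against $P_0$ — but since below resolution $K_n$ everything is uniform, this residual is of order $2^{-K_n} = k_n^{-1}$, matching the target rate in each regime. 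Finally, passing from $\mathbb{E}[W_v^v]$ back to $\mathbb{E}[W_v]$ costs a Jensen step, $\mathbb{E} W_v \le (\mathbb{E} W_v^v)^{1/v}$, which is why all rates come out as the $v$-th root of the summed bound.

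\textbf{Main obstacle.} The delicate point is not the empirical-process part (that is classical) but controlling the \emph{interaction between the Dirichlet prior mass and the multiresolution weighting}: one must verify that Assumption \ref{alphaUpperBound}'s bound on $\sum_{\pmb j}\alpha_{\pmb j,b_n}$ is exactly the threshold that keeps the prior-induced bias, when propagated through the $\sum_\ell 2^{-\ell v}2^{\ell d}(\cdot)$ multiresolution sum and then $v$-th-rooted, below the minimax rate in \emph{all three} regimes simultaneously — in particular that the seemingly different scalings $n^{1/2}$ versus $n^{1-v/d}$ are each the sharp ones for their regime. Getting the $d=2v$ boundary case to lose only a $\log^{1/v} n$ factor (rather than a power) requires being careful that the geometric series' common ratio is precisely $1$ there, so that the bound is (number of levels) $\times$ (common term) rather than something larger.
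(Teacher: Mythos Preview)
Your approach is essentially the paper's: apply the multiresolution upper bound at depth $K_n$, split $|\bar P_n(S)-P_0(S)|$ into a prior-bias piece (controlled by Assumption~\ref{alphaUpperBound}) and an empirical-fluctuation piece, bound the level-$k$ fluctuation sum by $\sqrt{2^{kd}/n}$, sum the resulting geometric series in $2^{k(d/2-v)}$, and finish with Jensen. The paper invokes the multinomial $\ell_1$ concentration (Result~\ref{multinomialConcentration}) directly rather than your per-cell bound $\sqrt{P_0(A)/n}$ followed by Cauchy--Schwarz, but the output $\sqrt{|\mathcal S_k|/n}$ is identical.

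Two slips to fix. First, the residual term $\Res(\mathcal S_{K_n})^v = 2^{-K_n v}$ in the multiresolution bound is \emph{not} zero --- it is the generic diameter cost of truncating at resolution $K_n$ and is present regardless of whether $\bar P_n$ is uniform within finest cells (you self-correct a few lines later, assigning it order $2^{-K_n}$). Second, your ``dominant level'' discussion is inverted: the multiresolution weights are $2^{-kv}$, not $2^{-k}$, so the geometric ratio is $2^{d/2-v}$ (not $2^{d/2-1}$); for $d<2v$ this ratio is below $1$ and the \emph{coarse} levels dominate, giving a convergent sum $O(1)$, while for $d>2v$ the \emph{fine} level $k=K_n$ dominates. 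Your stated final rates are nonetheless correct, and the bias bookkeeping under Assumption~\ref{alphaUpperBound} works exactly as you outline.
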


Second, regarding posterior contraction around $\bar{P}_n$, which serves as an intermediate step in proving posterior contraction around $P_0$, we have the following result. 

\begin{restatable}{theorem}{contractionAroundCentralEstimatorDwasser}
\label{contractionAroundCentralEstimatorDwasser}
Let $Y_1, \dots, Y_n \overset{iid}{\sim} P_0 \in \mathcal{P}_d$. Suppose $k_n$ satisfies Assumption \ref{knDef}.
Let $\gamma > 1$ and let $\{\tau_n(d,v)\}_{n = 1}^{\infty}$ be a sequence satisfying
\begin{equation}
\tau_n(d,v) =  \begin{cases}
 C_4(d,v)  n^{-\frac{1}{2v}} \log^{\frac{\gamma}{v}} (n)& d < 2v, \\
 C_5(d,v)  n^{-\frac{1}{2v}} \log^{\frac{1+\gamma}{v}}(n)	& d = 2v,\\
 C_6(d,v)  n^{-\frac{1}{d}} \log^{\frac{\gamma}{v}}(n)	& d > 2v,
 \end{cases}
\end{equation}
where 
\begin{align*}
C_4(d,v) &> \frac{1}{2} C_1(d,v)^{1/v} 2^{1/v} d^{1/p} &
C_5(d,v) &> \frac{1}{2} C_2(d,v)^{1/v} 2^{1/v} d^{1/p} \\ C_6(d,v) & > \frac{1}{2} C_3(d,v)^{1/v} 2^{1/v} d^{1/p},
\end{align*}
and
\begin{align*}
    C_1(d,v) &\geq \frac{2^{(\frac{d}{2}-v)}}{1-2^{(\frac{d}{2}-v)}} & C_2(d,v) & \geq \frac{2}{d\log(2)} & C_3(d,v) & \geq \frac{2^{2(\frac{d}{2}-v)}}{2^{(\frac{d}{2}-v)}-1}.
\end{align*}
Then, provided that $\alpha_{\pmb{j},b_n} > 0$ for each $n \in \mathbb{N}$ and $\pmb{j} \in [b_n]^d$, we have that for $1 \leq v < \infty$ and $d \in \mathbb{N}$
\[\mathbb{E}_{p_0} \Pi_n(P \in \mathcal{P}_d: W_v(P,\bar{P}_n) \geq \tau_n(d,v)) \to 0 \text{ as } n \to \infty.
\]
\end{restatable}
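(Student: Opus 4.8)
The plan is to exploit the conjugacy structure: under the posterior, $\pmb{\pi}_n \sim \Dir(\pmb{\alpha}^*_{b_n})$, so that a draw $P \sim \Pi_n$ is a histogram whose bin masses are a Dirichlet vector with mean exactly $\bar P_n$. The central estimator $\bar P_n$ and a posterior draw $P$ are two histograms on the \emph{same} dyadic partition $\{A_{\pmb j, b_n}\}$; hence I would first reduce the Wasserstein distance between them, via the multiresolution upper bound alluded to after the theorem statements, to a weighted sum over dyadic scales $0 \le \ell \le K_n$ of the $\ell^1$ discrepancies between the coarsened bin masses of $P$ and $\bar P_n$. Concretely, writing $\pmb{\pi}^{(\ell)}$ and $\bar{\pmb{\pi}}^{(\ell)}$ for the masses aggregated to resolution $\ell$, one gets a bound of the shape $W_v(P,\bar P_n)^v \lesssim \sum_{\ell=0}^{K_n} 2^{-\ell v} \, 2^{\ell d (1 - 1/\,?\,)} \,\|\pmb{\pi}^{(\ell)} - \bar{\pmb{\pi}}^{(\ell)}\|_1$ up to the $d^{1/p}$ diameter factor that explains the $d^{1/p}$ in the $C_i$'s; the geometric series in $2^{(d/2 - v)\ell}$ is exactly what produces the constants $C_1, C_2, C_3$ (convergent and summing to $C_1$ when $d < 2v$, logarithmically many equal terms giving the $\log$ and $C_2 = 2/(d\log 2)$ when $d = 2v$, and dominated by the top scale giving $C_3$ when $d > 2v$).

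Next I would control each $\|\pmb{\pi}^{(\ell)} - \bar{\pmb{\pi}}^{(\ell)}\|_1$ in probability under the posterior. The coarsened masses are again Dirichlet (aggregating Dirichlet coordinates), with mean $\bar{\pmb{\pi}}^{(\ell)}$, and the relevant concentration is that a $\Dir(\pmb a)$ vector with mean $\pmb m$ concentrates around $\pmb m$ in $\ell^1$ at rate governed by $1/\sqrt{\sum_i a_i}$ up to the number of coordinates $2^{\ell d}$. Since $\sum_{\pmb j}\alpha^*_{\pmb j,b_n} = n + \sum_{\pmb j}\alpha_{\pmb j,b_n} \ge n$, one obtains $\E_{z_n^*}\|\pmb{\pi}^{(\ell)} - \bar{\pmb{\pi}}^{(\ell)}\|_1 \lesssim 2^{\ell d/2} n^{-1/2}$ (a variance computation for the Dirichlet, then Cauchy--Schwarz over the $2^{\ell d}$ bins). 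Plugging this into the multiresolution bound and summing $\sum_{\ell \le K_n} 2^{-\ell v} 2^{\ell d/2} n^{-1/2}$ gives, in the $d<2v$ case, a convergent geometric series times $n^{-1/2}$; combined with $b_n \asymp n^{1/2v}$ (Assumption \ref{knDef}) the truncation at $\ell = K_n$ is harmless, and one lands on $\E_{z_n^*} W_v(P,\bar P_n)^v \lesssim C_1(d,v)\, n^{-1/2} d^{v/p}$, hence $\E W_v(P,\bar P_n) \lesssim C_1^{1/v} d^{1/p} n^{-1/(2v)}$ after Jensen. The $d=2v$ and $d>2v$ cases are the same computation with the series evaluated differently, producing the stated $\log$ power and the different $k_n$.

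Finally I would go from the expectation bound to the posterior-probability statement by a two-step averaging: Markov's inequality in the posterior draw $P$ gives $\Pi_n(W_v(P,\bar P_n) \ge \tau_n) \le \tau_n^{-v}\, \E_{z_n^*}[W_v(P,\bar P_n)^v \mid Y_{1:n}]$, which is itself a function of the data; then taking $\E_{p_0}$ and using the in-expectation bound above (which holds for every $P_0$, so in particular in $P_0$-expectation) yields $\E_{p_0}\Pi_n(W_v(P,\bar P_n)\ge\tau_n) \lesssim \tau_n^{-v} \cdot (\text{the rate})$. The extra $\log^{\gamma/v}(n)$ (resp.\ $\log^{(1+\gamma)/v}$) built into $\tau_n$ with $\gamma>1$, together with the strict inequalities $C_4 > \tfrac12 C_1^{1/v}2^{1/v}d^{1/p}$ etc., is precisely the slack that drives this ratio to $0$ (the $\tfrac12$ and $2^{1/v}$ are bookkeeping from splitting $W_v(P,\bar P_n)^v$ off a constant and from the crude $\lesssim$ constants). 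I expect the main obstacle to be the clean statement and proof of the Dirichlet $\ell^1$-concentration at every resolution \emph{simultaneously with} the correct dependence on the number of coordinates $2^{\ell d}$ and on $\sum \alpha^*_{\pmb j,b_n} \asymp n$ — i.e., making sure the variance bound is tight enough that the scale-$\ell$ contributions still form a summable (or logarithmically-many-term) series after multiplication by the multiresolution weights $2^{-\ell v}$; everything else is geometric-series accounting calibrated to yield exactly $C_1, C_2, C_3$.
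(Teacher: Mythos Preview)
Your proposal is correct and reaches the same conclusion as the paper, but the route is genuinely different at the concentration step. The paper applies the Dirichlet $\ell^1$ deviation bound (Result~\ref{diricheletConcentration}) \emph{per resolution} with $\delta=\log^{-\gamma}(n)$, then takes a union bound over the $K_n\lesssim\log n$ scales, yielding the almost-sure-under-$P_0$ bound $\Pi_n(W_v(P,\bar P_n)\ge\tau_n)\le K_n\log^{-\gamma}(n)\to 0$; this is precisely why the paper needs $\gamma>1$ and the strict inequalities on $C_4,C_5,C_6$ (they create the room to absorb the resolution term $2^{-K_n v}$ before invoking Lemma~\ref{closeInParamHelper}). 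You instead take the posterior expectation of each $\|\pmb\pi^{(\ell)}-\bar{\pmb\pi}^{(\ell)}\|_1$ first, sum to obtain a deterministic bound on $\E_{z_n^*}W_v^v(P,\bar P_n)$, and apply Markov once at the end. This is cleaner: the bound on $\E_{z_n^*}W_v^v$ depends on the data only through $\sum_{\pmb j}\alpha^*_{\pmb j,b_n}\ge n$, so it is in fact a deterministic inequality and the outer $\E_{p_0}$ is vacuous; moreover your argument would go through for any $\gamma>0$ and any positive $C_4,C_5,C_6$, so you are proving a slightly stronger statement than required. The paper's approach buys nothing extra here; it is simply a different packaging of the same two ingredients (multiresolution bound plus Dirichlet $\ell^1$ moment).

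Two small clean-ups for your write-up: the factor ``$2^{\ell d(1-1/?)}$'' in your first displayed multiresolution bound is spurious---the bound (Equation~\ref{multires}) has no such term, only $2^{-(k-1)v}$ times the scale-$k$ $\ell^1$ discrepancy; the $2^{\ell d/2}$ you correctly use later enters only through the Dirichlet expectation $\E_{z_n^*}\|\pmb\pi^{(\ell)}-\bar{\pmb\pi}^{(\ell)}\|_1\le (n+\sum\alpha)^{-1/2}\sqrt{2^{\ell d}}$. Also, the Jensen step you mention is unnecessary once you Markov directly on $W_v^v$.
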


While Theorem \ref{frequentistDwasser} is important in its own right, Theorems \ref{frequentistDwasser} and \ref{contractionAroundCentralEstimatorDwasser} can be used to provide a posterior contraction rate for neighborhoods of $P_0$. Specifically, via multiple applications of Markov's inequality, we are able to construct Theorem \ref{concludingTheoremDwasser}.

\begin{restatable}{theorem}{concludingTheoremDwasser}
\label{concludingTheoremDwasser}
Let $Y_1, \dots, Y_n \overset{iid}{\sim} P_0 \in \mathcal{P}_d$. Suppose $\gamma > 1$ and $k_n$ satisfies Assumption \ref{knDef}
and
\begin{equation}
\epsilon_n(d,v) := 
\begin{cases}
C_7(d,v) n^{-\frac{1}{2v}} \log^{\frac{\gamma}{v}}(n)& d < 2v, \\
C_8(d,v) n^{-\frac{1}{2v}} \log^{\frac{1+\gamma}{v}} (n) & d = 2v, \\
C_9(d,v) n^{-\frac{1}{d}} \log^{\frac{\gamma}{v}} (n)& d > 2v,
\end{cases}
\end{equation}
where $C_7(d,v) \geq 2 C_4(d,v)$, $C_8(d,v) \geq 2 C_5(d,v)$ and $C_9(d,v) \geq 2 C_6(d,v)$. 
\sloppy Here $C_4(d,v),C_5(d,v),C_6(d,v)$ are set as in the statement of Theorem \ref{contractionAroundCentralEstimatorDwasser}. Now assuming that for each $n \in \mathbb{N}$ and $\pmb{j} \in [b_n]^d$, $\alpha_{\pmb{j},b_n} > 0$, and that $\pmb{\alpha}_{b_n}$ satisfies Assumption \ref{alphaUpperBound}
we have that for $1 \leq v < \infty$ and $d \in \mathbb{N}$
\[
\Pi_n(P \in \mathcal{P}_d : W_v(P_0,P) \geq \epsilon_n(d,v) ) \overset{i.p \; P_0}{\to} 0.
\]
\end{restatable}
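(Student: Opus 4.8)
The plan is to combine Theorems \ref{frequentistDwasser} and \ref{contractionAroundCentralEstimatorDwasser} via a triangle inequality for $W_v$ together with a Markov-inequality argument. Since $W_v$ is a genuine metric on $\mathcal{P}_d$ (for $v\ge 1$), for any $P$ we have $W_v(P_0,P)\le W_v(P_0,\bar P_n)+W_v(\bar P_n,P)$. Hence the event $\{W_v(P_0,P)\ge \epsilon_n(d,v)\}$ is contained in $\{W_v(\bar P_n,P)\ge \epsilon_n(d,v)/2\}\cup\{W_v(P_0,\bar P_n)\ge \epsilon_n(d,v)/2\}$. The second event does not depend on $P$, so it is either all of $\mathcal{P}_d$ or empty; on the complement we get the pointwise bound $\Pi_n(W_v(P_0,P)\ge\epsilon_n)\le \Pi_n(W_v(\bar P_n,P)\ge \epsilon_n/2)$.

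The first step is to control, with $P_0$-probability tending to one, the deterministic quantity $W_v(P_0,\bar P_n)$. By Theorem \ref{frequentistDwasser}, $\mathbb{E}_{P_0}W_v(P_0,\bar P_n)\lesssim r_n$ where $r_n$ is $n^{-1/2v}$, $n^{-1/2v}\log^{1/v}n$, or $n^{-1/d}$ in the three regimes. By Markov's inequality, $\Pr_{P_0}(W_v(P_0,\bar P_n)\ge \epsilon_n/2)\le 2\,\mathbb{E}_{P_0}W_v(P_0,\bar P_n)/\epsilon_n \lesssim r_n/\epsilon_n$. Because $\epsilon_n(d,v)$ carries an extra $\log^{\gamma/v}n$ (resp. $\log^{(1+\gamma)/v}n$) factor relative to $r_n$ and $\gamma>1$, the ratio $r_n/\epsilon_n\to 0$, so this "bad" event has vanishing probability. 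This is where the choice $C_7\ge 2C_4$ etc. and the extra logarithmic power $\gamma>1$ in $\epsilon_n$ relative to $\tau_n$ gets used: we need $\epsilon_n/2\ge\tau_n$ so that the next step applies, and we need $\epsilon_n$ large enough that Markov against the Theorem \ref{frequentistDwasser} bound still goes to zero.

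Next I would handle the term $\Pi_n(W_v(\bar P_n,P)\ge \epsilon_n/2)$. Choosing $\epsilon_n/2\ge \tau_n(d,v)$ — which holds precisely because $C_7\ge 2C_4$, $C_8\ge 2C_5$, $C_9\ge 2C_6$ and the logarithmic exponents in $\epsilon_n$ and $\tau_n$ match — gives the inclusion $\{W_v(\bar P_n,P)\ge\epsilon_n/2\}\subseteq\{W_v(\bar P_n,P)\ge\tau_n(d,v)\}$, hence $\Pi_n(W_v(\bar P_n,P)\ge\epsilon_n/2)\le \Pi_n(W_v(\bar P_n,P)\ge\tau_n(d,v))$. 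Theorem \ref{contractionAroundCentralEstimatorDwasser} gives $\mathbb{E}_{p_0}\Pi_n(W_v(P,\bar P_n)\ge\tau_n(d,v))\to 0$, i.e. a random variable in $[0,1]$ with expectation going to zero, which therefore tends to zero in $P_0$-probability. Finally, assembling: outside an event of $P_0$-probability $o(1)$ (from the first step) we have $\Pi_n(W_v(P_0,P)\ge\epsilon_n)\le \Pi_n(W_v(\bar P_n,P)\ge\tau_n(d,v))\overset{i.p.}{\to}0$; combining the two $o(1)$ contributions (a sum of something $o(1)$ in probability plus an indicator of an $o(1)$-probability event) yields $\Pi_n(W_v(P_0,P)\ge\epsilon_n(d,v))\overset{i.p.\,P_0}{\to}0$, as claimed.

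The only mildly delicate point — and the "main obstacle," though it is more bookkeeping than substance — is the careful matching of constants and logarithmic exponents across the three regimes $d<2v$, $d=2v$, $d>2v$, making sure simultaneously that (i) $\epsilon_n/2\ge\tau_n$ so the contraction theorem applies verbatim, and (ii) $\mathbb{E}_{P_0}W_v(P_0,\bar P_n)/\epsilon_n\to 0$ so the Markov step kills the data-dependent centering error; both are guaranteed by the stated conditions $C_7\ge 2C_4$, $C_8\ge 2C_5$, $C_9\ge 2C_6$ together with $\gamma>1$. Everything else is two applications of Markov's inequality and the triangle inequality, exactly as the paragraph preceding the theorem statement indicates.
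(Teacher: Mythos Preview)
Your proposal is correct and follows essentially the same approach as the paper: the triangle inequality plus union bound to split into a centering term (controlled by Markov's inequality against Theorem~\ref{frequentistDwasser}) and a contraction-around-$\bar P_n$ term (controlled by Theorem~\ref{contractionAroundCentralEstimatorDwasser} via the inclusion $\epsilon_n/2\ge\tau_n$ guaranteed by the constant conditions). The only cosmetic difference is that the paper takes $P_0$-expectations throughout and applies a final Markov step to pass to convergence in probability, whereas you argue directly in probability by combining an indicator of a vanishing-probability event with a $[0,1]$-valued random variable whose expectation tends to zero; the two routes are equivalent.
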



There are three main technical tools (aside from conjugacy) used in the proofs of the main results. The first is Lemma 6 of \cite{singh2018minimax} which is a multiresolution upper bound on $W_v$. To state this lemma, we need the following two definitions. Definition \ref{def: nested partition} describes a nested partition, and Definition \ref{def: partition resolution} defines its resolution.

\begin{definition}[Nested Sequence of Partitions \citep{singh2018minimax} on page 20]
 \label{def: nested partition}
If $\mathcal{S},\mathcal{T}$ are partitions of a sample space $\Omega$, then $\mathcal{T}$ is a \textit{refinement} of $\mathcal{S}$ if for every $T \in \mathcal{T}$, there exists an $S \in \mathcal{S}$ with $T \subseteq S$. And a sequence $\{\mathcal{S}_k\}_{k=0}^{K}$ of partitions is called \textit{nested} if, for each $k \leq K-1$, $\mathcal{S}_{k+1}$ is a refinement of $\mathcal{S}_k$.
\end{definition}

\begin{definition}[Resolution of a Partition \citep{singh2018minimax}] \label{def: partition resolution}
If $\mathcal{S}$ is a finitely sized partition of the metric space $(\Omega,\rho)$, then $\Res(\mathcal{S}) = \max_{S \in \mathcal{S}} diam(S)$ where diameter is computed under $\rho$.
\end{definition}

Given these definitions, we now state the first technical tool in Result \ref{wassersteinMultiRes}, which gives the multiresolution upper bound of the Wasserstein distance. This bound is fundamental to the proofs of Theorems \ref{frequentistDwasser} and \ref{contractionAroundCentralEstimatorDwasser}.
\begin{result}[Wasserstein Multiresolution Upper Bound (Adapted Lemma 6 of \cite{singh2018minimax})]
\label{wassersteinMultiRes}
Let $\Omega \subseteq \mathbb{R}^d$ such that $diam(\Omega) < \infty$. For $K$ a positive integer, let $\{S_k\}_{k \in [K]}$ is a sequence of nested partitions of $(\Omega,\rho)$ with $\mathcal{S}_0 = \Omega$ where each partition has only finitely many elements. Then for any $1 \leq v < \infty$ and $\mu,\nu$ probability measures on $\Omega$
\begin{equation}
W_v^{v}(\mu,\nu) \leq \Res(\mathcal{S}_K)^v + \sum_{k=1}^{K} \left(\Res(\mathcal{S}_{k-1}) \right)^{v} \left(\sum_{S \in \mathcal{S}_k} | P(S) - Q(S) | \right).
\end{equation}
\end{result}

  \cite{singh2018minimax} prove a more general version of Result \ref{wassersteinMultiRes} in their work. There the partitions may have countably many elements and $\Omega$ is not necessarily a subset of $\mathbb{R}^{d}$. Here, however, we have only stated the lemma to the level of generality needed in our work.


The second technical tool is an upper bound on a Multinomial random variables expected distance from its mean under the distance induced by the $\| \cdot \|_1$ norm. This is used in the proof that upper bounds the expected loss of $\bar{P}_n$ (Theorem \ref{frequentistDwasser}). Specifically, Result \ref{multinomialConcentration} restates Lemma 8 of \cite{singh2018minimax}.
\begin{result}[Multinomial concentration (Lemma 8 of \cite{singh2018minimax})]
\label{multinomialConcentration}
\sloppy If $(X_1,\dots,X_k) \sim \Mult(n,p_1,\dots,p_k)$ and $Z := \sum_{j=1}^{k} |X_j - n p_j|$, then 
\[\mathbb{E} (Z/n) \leq \sqrt{\frac{k-1}{n}}.\]
\end{result}

The last technical tool is the Dirichlet distribution concentration around its mean in the distance induced by the $\|\cdot\|_1$ norm. We prove a sufficient concentration result for our purposes in Section \ref{diricheletConcentration}. See Appendix Section \ref{app:dConcentration} for the short proof of this result. Result \ref{diricheletConcentration} will be used in the proof giving a posterior contraction rate around $\bar{P}_n$ in Theorem \ref{contractionAroundCentralEstimatorDwasser}.
\begin{restatable}{result}{diricheletConcentration}
\label{diricheletConcentration} 
Let $k \in \mathbb{N}$ and $(\pi_1,\pi_2,\dots,\pi_k) \sim \Dir(\alpha_1,\alpha_2,\dots,\alpha_k)$. Then for $\delta > 0$
\[
\mathbb{P} \left(\sum_{j=1}^{k} |\pi_j - \mathbb{E}(\pi_j) | \geq  \frac{(\bar{\alpha})^{-\frac{1}{2}} \sqrt{k}}{\delta} \right) \leq \delta,
\]
where $\bar{\alpha} := \sum_{j=1}^{k} \alpha_j$.
\end{restatable}

For posterior contraction around $\bar{P}_n$, we use Result \ref{diricheletConcentration} to show that almost surely under $P_0$, the posterior Dirichlet probability that the $\|\cdot\|_1$ norm based difference between the $\pi$ parameters of the posterior and the $\pi$ parameters of $\bar{P}_n$ is smaller than the desired rate is tending to 1 as $n \to \infty$.

\subsection{Discussion of main results}
In this section, we discuss our main results with respect to general minimax and memory efficient inference theory, comparing the Bayesian histogram to other procedures for estimating distributions under Wasserstein distance. 
Throughout the remainder of the paper, references to $\bar{P}_n$ assume the sequences $k_n$ and $\pmb{\alpha}_{b_n}$ satisfy the conditions of Theorem \ref{frequentistDwasser} and references to $\Pi_n$ assume $k_n$ and $\pmb{\alpha}_{b_n}$ satisfy the conditions of Theorem \ref{concludingTheoremDwasser}.
\subsubsection{Minimax theory}
According to \cite{singh2018minimax},
\begin{equation}
    \label{minimaxBorelProbMeasures}
    \inf_{\Tilde{P}} \sup_{P_0 \in \mathcal{P}_d} \mathbb{E}_{P_0} W_v(\Tilde{P},P_0) \gtrsim \begin{cases}
n^{-\frac{1}{2v}} & d \leq 2v, \\
n^{-\frac{1}{d}} & d > 2v,
\end{cases}
\end{equation}
where the $\inf$ is taken over all estimators $\tilde{P}$ from $n$ observations. Thus by Theorem \ref{frequentistDwasser}, in the class $\mathcal{P}_d$, $\bar{P}_n$ provides a matching upper bound in the case that $d < 2v$ or $d > 2v$. In the case $d = 2v$, $\bar{P}_n$ matches the lower bound up to a logarithmic factor. The logarithmic factor mismatch with the minimax lower bound of Equation \ref{minimaxBorelProbMeasures} is expected in this case. This is because aside from perturbations caused by prior concentrations, $\bar{P}_n$ agrees with the empirical measure on members of partitions up to the resolution of the model, which is the deepest resolution considered during application of the Wasserstein multiresolution upper bound in Theorem \ref{frequentistDwasser}. Existing works that analyze the rate of convergence for the empirical measure (such as \cite{singh2018minimax} and \cite{ajtai1984optimal}) using a multiple partition based analysis do not avoid a logarithmic factor mismatch with the available minimax lower bound.

Moreover, according to \cite{singh2018minimax}, in all cases $d$ and $v$, these are the same rates proved for the empirical measure when considering class $\mathcal{P}_d$.
The contraction rate of Theorem \ref{concludingTheoremDwasser} matches the minimax lower bound, up to a logarithmic factor in all cases $d$ and $v$.

As discussed in Section \ref{sec: intro}, \cite{niles2022minimax} study Besov classes of densities $B_{p',q}^{s}(\tilde{L})$ (see Equation \ref{besovSpaceDefinition}). In particular, they show that if $\tilde{L} >0$ is sufficiently large, then for $1 \leq v < \infty, 1 \leq p',q \leq \infty, s >0$
\begin{equation}
    \label{minimaxBesov}
    \inf_{\Tilde{P}} \sup_{f_0 \in B_{p',q}^{s}(\tilde{L})} \mathbb{E}_{f_0} W_v(\Tilde{P},f_0) \gtrsim \begin{cases} n^{-\frac{1+s/v}{d+s}} & d-s \geq 2v, \\ 
    n^{-1/2v} & d-s < 2v, \end{cases}
\end{equation}
where again the $\inf$ is taken over all estimators $\tilde{P}$ from $n$ observations. Observe that when $d \leq 2v$, the regularity $s$ does not quicken the minimax rate relative to when considering the larger class $\mathcal{P}_d$. Thus, even though our proofs do not assume a density necessarily exists, we still have that when considering any $B^{s}_{p',q}(\tilde{L})$ class with sufficiently large $\tilde{L}$  where $1 \leq p',q \leq \infty, s >0$ and $d \leq 2v$, $\bar{P}_n$ achieves the minimax lower bound, and the contraction rate of Theorem \ref{concludingTheoremDwasser} achieves the lower bound up to logarithmic terms. 

On the other hand, consider the case $d > 2v$. If $s$ satisfies $d -s < 2v$, then the $n^{-1/2v}$ lower bound rate of Equation \ref{minimaxBesov} is polynomially decaying faster than the $n^{-1/d}$ upper bound we have proved for $\bar{P}_n$ and $\Pi_n$ in Theorems \ref{frequentistDwasser} and \ref{concludingTheoremDwasser} respectively. Further, if $d-s \geq 2v$, then again the $n^{-(1+s/v)/(d+s)}$ lower bound rate of Equation \ref{minimaxBesov} is polynomially decaying faster than the $n^{-1/d}$ upper bound we have proven for $\bar{P}_n$ and $\Pi_n$. 
As our proofs do not assume existence of an $s$ regular density, we now pose the question: when $d > 2v$, by assuming the data generating measure $P_0$ has a smooth density, can $\bar{P}_n$ and $\Pi_n$ achieve a rate decaying faster than $n^{-1/d}$?

In Appendix Section \ref{app:nwbConnect}, we show that the estimator presented in \citet[Theorem~6]{niles2022minimax} is a slight alteration to $\bar{P}_n$ where $\alpha_{i,b_n} = 0,i \in [b_n]^d$ and instead of setting $k_n$ as we do, they set $k_n \asymp n^{1/(d+s)}$ in all cases $d$ and $v$. This ensures that the number of bins in the histogram depends on the regularity $s$. Letting $\ddot{P}_n$ denote their estimator, it is shown that up to a logarithmic term, $\ddot{P}_n$ obtains the minimax lower bound of Equation \ref{minimaxBesov} when considering the \hbox{H\"{o}lder} continuous density classes $\mathcal{C}^{s}(\tilde{L}) = \mathcal{B}_{p' = \infty, q = \infty}^{s}(\tilde{L})$ for $0 < s < 1$ and $v \geq 2$ regardless of the relation between $d$ and $v$. Thus to answer the question posed previously, by modifying $\bar{P}_n$ so that the number bins in the histogram depends on the regularity $s$, it is possible to achieve minimax rate optimality in the $d > 2v$ case.

So while $\ddot{P}_n$ achieves minimax optimality at least up to logarithmic terms when $d > 2v$ in the H\"{o}lder distribution classes and we expect that this is not so for $\bar{P}_n$, we emphasize that both $\bar{P}_n$ and $\ddot{P}_n$ achieve the minimax rate when $d \leq 2v$ in the H\"{o}lder classes. Furthermore, the alternate estimator $\ddot{P}_n$ is \textit{not} adaptive to the smoothness in that its construction relies on knowledge of $s$, which is usually unavailable in practice; the estimators constructed in this paper, $\bar{P}_n$ and $\Pi_n$, do not suffer from this problem. In these respects $\bar{P}_n$ is superior to $\ddot{P}_n$ in the $d < 2v $ case. Future work will investigate how to maintain adaptivity while obtaining minimax optimality in the $d > 2v$ case for the H\"{o}lder classes.

\subsubsection{Memory efficiency}

Storing $\bar{P}_n$ and $\Pi_n$ only requires maintaining the prior concentration and number of samples associated with each bin. The number of bins is
\[
b_n^d = 2^{\ceil{\log_2(k_n)}d} \lesssim  k_n^d =  \begin{cases} n^{d/2v} & d \leq 2v. \\
n & d > 2v. \end{cases}
\]
Thus, in the case that $d < 2v$, $\bar{P}_n$ provides a polynomial memory improvement over $\hat{P}_n$, and like $\hat{P}_n$, $\bar{P}_n$ also obtains the minimax rate in the class $\mathcal{P}_d$. Moreover, if the sample size $n$ is known prior to collecting data, then a simple binary comparison algorithm (recursive comparison to midpoints) can be used to iteratively place each point $Y_i$ into its appropriate bin. Since there are $2^{K_n}$ bins along each axis and $K_n \lesssim \log(n)$, the total time needed to construct the memory efficient representation is $\lesssim n \log^d(n)$ where the power $d$ comes from performing the binary comparison algorithm along each axis. In particular only a logarithmic time penalty is paid for a polynomial memory reduction.

As discussed in the previous section, whenever $d \leq 2v$, $\bar{P}_n$ and $\Pi_n$ achieve frequentist and posterior contraction rate optimality when considering the $\mathcal{B}_{p',q}^{s}(\tilde{L})$ class for $s >0, 1 \leq p',q \leq \infty$ and \hbox{$\tilde{L}$} sufficiently large. Further, when restricting to H\"{o}lder classes $\mathcal{C}^{s}(\tilde{L})$ with $0 < s < 1$, the histogram estimator $\ddot{P}_n$ of \cite{niles2022minimax} also achieves rate optimality when $v \geq 2$, but does so by using $\asymp n^{d/(d+s)}$ bins. (We refer the reader to Appendix Section \ref{app:nwbConnect}, where we show that this estimator is indeed a dyadic histogram where all prior concentrations are zero and $k_n \asymp n^{1/(d+s)}$). In theory, $\ddot{P}_n$ also provides a polynomial memory improvement over $\hat{P}_n$. As $s \to 0$ there is no improvement, but as $s \to 1$ the factor of reduction in memory footprint relative to $\hat{P}_n$ tends to $n^{1/(d+1)}$. However, in practice, $s$ is usually unknown, rendering this estimator impossible to construct. Moreover, in the $d < 2v$ scenario, $n^{d/2v}$ is smaller than $n^{d/(d+s)}$ by a polynomial for $s <1$, thus even if $s$ is known prior to collecting data, $\ddot{P}_n$ would still require polynomially more memory than $\bar{P}_n$ and $\Pi_n$.

\subsubsection{Prior constraints}
Here, we recall that the upper bound constraint on the prior that appears in Theorems \ref{frequentistDwasser} and \ref{concludingTheoremDwasser} is
\begin{equation}
\label{eqn:priorConstraint}
\sum_{\pmb{j} \in [b_n]^d} \alpha_{\pmb{j},b_n} \lesssim 
\begin{cases}
n^{1/2} & d \leq 2v. \\
n^{1-\frac{v}{d}} & d > 2v.
\end{cases}
\end{equation}
Thus asymptotically in $n$, the specific arrangement of the prior concentrations is unimportant, only the total prior concentration. Moreover the number of prior concentrations is $b_n^d$ where  
\newline \begin{eqnarray}b_n^d \asymp \begin{cases} n^{d/2v} & d \leq 2v. \\ n & d > 2v.\end{cases} \end{eqnarray}
In finite samples, the practitioner may be interested in encoding specificity through the prior. One choice of prior concentrations that may be desired is to set all concentrations equal. If for a given $n$, all prior concentrations are set to a constant $c(n) >0$, the sample size $n$ prior mean histogram is the uniform distribution. For the practitioner with little apriori knowledge about the distribution to be estimated, this is one way of encoding a vague prior. When $d \leq v$, setting $c(n) \equiv c$ is possible because $n^{d/2v} \leq n^{1/2}$. For $v < d \leq 2v$, setting $c(n) = n^{1/2-d/2v}$ is sufficient. For $d > 2v$, similarly setting $c(n) = n^{-v/d}$ will work. We note that even though $c(n) \to 0$ in these latter cases, the shape of the prior is unaffected.

As discussed earlier, a proper prior is not necessary for use of $\bar{P}_n$. Thus if desired, one can set all prior concentrations to zero if analysis will only involve $\bar{P}_n$ but not $\Pi_n$.

\section{Applications to memory efficient inference}
\label{sec:memEff}

In this section, we discuss practical considerations when using $\bar{P}_n$ and $\Pi_n$ for distribution estimation in the batch and streaming data settings in the memory efficient $d < 2v$ case. We also describe the implications of our results for estimation of the Wasserstein distance between distributions and provide instruction on the type of algorithm that should be employed when $\bar{P}_n$ is used for estimating a Wasserstein distance.
\subsection{Batch inference}
In the memory constrained batch inference setting, the only concern of the practicioner is to infer $P_0$ \textit{after} reading all of the data. However, computational memory constraints dictate that at any given time, the amount of data that can be stored is limited. 

If the practitioner knows the sample size before reading the data, then it is clear how to construct the memory efficient histogram representations $\bar{P}_n,\Pi_n$. One can simply use the binary comparison algorithm to drop each data point into one of the $b_n^d$ bins while storing the bin counts on the hardware reading the data. If the sample size is not known, the data can be read twice. On the first pass, the sample size is computed, and on the second pass, the bin counts are computed. Still the runtime is $\lesssim n \log^d(n)$, which is only a logarithmic factor worse than the time needed to construct the empirical measure.

\subsection{Streaming inference}
In the streaming setting, the practitioner needs to infer $P_0$ as the data is being read. In this case, the total size $n$ of the stream is not known at the time of inference and waiting to observe the full stream before conducting inference is not acceptable. To deal with this, the practitioner should provide a conservative upper bound $M$ on $n$. As the stream is being read, the partitions of the space that will be passed through are
\begin{equation}
\mathcal{S}_R = \left\{ A_{\pmb{i},2^{R}}: \pmb{i} \in [2^R] \right\},
\end{equation}
for $R \in \{1,\dots,\ceil{\log_2(k_M)} \}$. To do inference on the fly, suppose that before observing the $r^{th}$ data point $Y_r$ for some $1 \leq r \leq n$, the bin counts for the points $Y_1,\dots,Y_{r-1}$ into each of the partitions $\mathcal{S}_{R}$ for $R \in \{1,2,\dots,\ceil{\log_2(k_M)} \}$ are stored. Then when observing $Y_r$  in the stream, the binary comparison algorithm should be performed to place the $r^{th}$ point into the \textit{finest} partition $\mathcal{S}_{\ceil{\log_2(k_M)}}$. The binary comparison algorithm (recursive comparison to midpoints of search intervals) will ensure that $Y_r$ is also placed into each of the partitions $\mathcal{S}_{R}$ for $R \in \{1,\dots,\ceil{\log_2(k_M)}-1 \}$. Then to perform inference after placing $Y_r$, one can simply compute $R_{r} := \ceil{\log_2(k_r)}$ and use the bin counts from partition $R_{r}$.

In this streaming algorithm, the memory footprint in the $d < 2v$ case, where $k_M = M^{1/2v}$, is $\leq \sum_{R =1}^{\ceil{\log_2(k_M)}} 2^{R d} \leq 2 (2^{\ceil{\log_2(k_M)}d}) \leq 2^{d+1} M^{d/2v}$.
Moreover, the total running time after observing the entire stream is proportional (ignoring constants not depending on $n$ or $M$) to $n \log^d(M)$. As $M$ approaches $n$, these memory and runtime costs are asymptotically (in $n$) the same as those of the batch procedure.

\subsection{Use in estimating Wasserstein distance}
Let $\mathcal{D}_d \subseteq \mathcal{P}_d$ denote the discrete distributions on $[0,1]^d$. For $P_0 \in \mathcal{P}_d$, by the reverse triangle inequality,
\begin{equation}
    \label{estWassDist}
    \sup_{P_1 \in \mathcal{P}_d } \mathbb{E}_{P_0} |W_v(P_0,P_1) - W_v(\bar{P}_n,P_1)| \leq \mathbb{E}_{P_0} W_v(P_0,\bar{P}_n).
\end{equation}
Thus for example, if $P_1 \in \mathcal{P}_d$ is known and $P_0$ is unknown, the convergence rate for estimating $W_v(P_0,P_1)$ is no worse than that of estimating $P_0$ from $\bar{P}_n$ under $W_v$; in particular the convergence rates of Theorem \ref{frequentistDwasser} apply.

 If $P_1 \in \mathcal{D}_d$, the practitioner has two options for computing $W_v(\bar{P}_n,P_1)$. The first is via semi-discrete optimal transport as described in \cite{merigot2011multiscale,kitagawa2019convergence}. Semi-discrete optimal transport methods allow for calculation of the Wasserstein distance between a discrete and continuous distribution; in particular the histogram can be used as the continuous distribution. However, if $\bar{P}_n$ is modified to be a discrete measure where the mass within each bin is concentrated at a particular location within the bin, the convergence rates of Theorem \ref{frequentistDwasser} still hold because at all resolutions considered in the proof, the estimator is unchanged. This modification allows for fully discrete optimal transport computations, in which the reduced memory footprint of $\bar{P}_n$ may yield run time benefits. 
In simulations 3 and 4 of Section \ref{sec:sims}, we experimentally compare semi-discrete optimal transport to discretization of the histogram followed by discrete-discrete optimal transport, and these experiments suggest that at small sample sizes, the former method statistically outperforms the latter for the distribution estimation problem. 

As discussed in more detail in the following section, the correctness of experimentally efficient semi-discrete optimal transport algorithms rely on an assumption about the connectedness of the support of the density \citep{kitagawa2019convergence}. Therefore, when estimating distributions with non-connected support with the intent to compare to known discrete distributions, setting the prior concentrations to be non-zero in $\bar{P}_n$ is pivotal in ensuring Wasserstein computation using semi-discrete optimal transport is possible.

Another scenario of interest is when both $P_0,P_1 \in \mathcal{P}_d$ are unknown and the task is to estimate $W_v(P_0,P_1)$. In this case, if $n$ independent samples are collected from $P_0$ and $P_1$ respectively and $P_0 \times P_1$ is the product measure, then again by the reverse triangle inequality
\begin{equation}
    \mathbb{E}_{P_0 \times P_1}|W_v(P_0,P_1) - W_v(\bar{P}_{0n},\bar{P}_{1n})| \leq \mathbb{E}_{P_0}W_v(\bar{P}_{0n},P_0)+\mathbb{E}_{P_1} W_v(\bar{P}_{1n},P_1).
\end{equation}
Thus the convergence rate for estimating $W_v(P_0,P_1)$ is not worse than the slower of the rates for estimating the distributions of $P_0$ and $P_1$. In particular the convergence rates of Theorem \ref{frequentistDwasser} apply. In computation, the practitioner can discretize $\bar{P}_{0n}$ and $\bar{P}_{1n}$ by placing the mass of each bin at a single point within the bin, and then use a standard discrete-discrete optimal transport algorithm.
\section{Simulations}
\label{sec:sims}
The  theory we have presented accompanied by the existing minimax lower bounds of \cite{singh2018minimax} and \cite{niles2022minimax}  identify classes of distributions in which the estimator $\bar{P}_n$ is minimax rate optimal. In this section, we use simulations to compare $\bar{P}_n$ to existing frequentist minimax procedures for distribution estimation under $W_v$.

In the following, we consider only the case $d < 2v$, where the number of bins in $\bar{P}_n$ is $2^{\lceil \log_2( n^{d/2v} ) \rceil} \lesssim n^{d/2v}.$ This is the memory efficiency case where the storage requirement of the histogram yields a polynomial $n^{1-d/2v}$ memory improvement over the empirical measure, and a polynomial memory improvement over the histogram presented in \cite{niles2022minimax}. The comparisons we show are between \hbox{$\bar{P}_n$}, at various different prior concentration settings, and the empirical measure. While the empirical measure is chosen because it is a minimax rate optimal procedure \hbox{\citep{singh2018minimax}}, we do not consider the histogram presented in \cite{niles2022minimax} since it is not adaptive to the regularity $s$. This is an important consideration, since there is frequently limited knowledge that a general distribution $P_0$ admits an $s \in (0,1]$ H\"{o}lder regular density, rendering construction of this estimator infeasible.

We hypothesize that there are at least two types of data generating distributions in which $\bar{P}_n$ will perform just as well, or better than the empirical measure. The first is when $P_0$ is close to the uniform distribution, which is a special instance of the posterior mean histogram at any sample size $n$, when all posterior weights are equal.
The second case is when distributions have areas of zero mass in between areas of positive mass. For one dimensional distributions $P_0$ with densities $p_0$, \citet[Chapter~5]{bobkov2019one} define the quantity 
$$J_v(P_0) := \int_{0}^{1} \frac{[F_0(x)(1-F_0(x))]^{v/2}}{p_0(x)^{v-1}} \mr{d}x, $$
and show that the empirical measure will achieve the standard rate $n^{-1/2}$ provided $J_v(P_0) < \infty$. Furthermore, \citet[Chapter~7]{bobkov2019one} prove that for any probability distribution $P_0$ on $\mathbb{R}$ whose support is not an interval, the rate achieved by the empirical measure is $n^{-1/2v}$. In one dimension, since the upper bound $n^{-1/2v}$ for the memory efficient histogram has been established, we look to distributions with disconnected support for instances in which the memory efficient histogram and empirical measure will certainly perform similarly (or where the memory efficient histogram will perform better). For the two dimensional case, there is also evidence that the minimax rate can decay faster than $n^{-1/2v}$ when considering classes of distributions with some strong characterization of connectedness in their support. For example, \citet[~Theorem 2]{niles2022minimax} give an estimator that achieves the $n^{-1/2}$ rate up to logarithmic terms for classes of smooth densities that are bounded below by a positive constant. Construction of this estimator requires knowledge of the $L^v$ norm of the density to be estimated; due to this lack of adaptivity and likely unavailability of $L^v$ norm information in the density estimation setting, we do not consider this estimator in our simulations. However, the existence of this estimator suggests that even in higher dimensions, within the class $\mathcal{P}_d$, we should investigate distributions that either have a density touching zero or that have disconnected support for instances in which the memory efficient histogram will  perform similarly or better than existing estimators in this problem space. It may be true that like the empirical measure in one dimension, the convergence rate of $\mathbb{E}_{P_0}W_v(P_0,\bar{P}_n)$ can quicken when $P_0$ has a density that is bounded below, but we leave this investigation for future work.

\subsection{One dimensional simulations}
For one dimensional simulations, in order to compute the Wasserstein distance, we utilize the quantile function based representation of $W_v$ for two probability measures $P_1$ and $P_2$ on $[0,1]$ \citep[Chapter 2]{bobkov2019one}. Specifically, letting $F_1^{-1}(z) := \inf \{x \in [0,1]: F_1(x) \geq z\}$ and $F_2^{-1}(z) = \inf\{x \in [0,1]: F_2(x) \geq z\}$, we have that $W_v^v(P_1,P_2) = \int_{0}^{1} |F_1^{-1}(z) - F_2^{-1}(z)|^v \mr{d}z$. This representation of $W_v$ permits exact or nearly exact computation when comparing two distributions with easily computable quantile functions.  Additionally, the quantile function of $\bar{P}_n$ is piecewise linear between the end points of the bins. Letting $\bar{F}_n^{-1}$ denote the quantile function of $\bar{P}_n$, we have that
\begin{equation}
    \bar{F}_n^{-1}(z) = \sum_{j=1}^{b_n} \mathbb{I}\left(\frac{j-1}{b_n} \leq z < \frac{j}{b_n} \right) \left[\frac{j-1}{b_n}+\frac{z-\sum_{t=1}^{j-1} \hat{\pi}_{t,b_n}}{b_n \hat{\pi}_{j,b_n}}\right],
\end{equation}
where for $1 \leq j \leq b_n,\hat{\pi}_{j,b_n}$ represent the bin probabilities computed for the histogram $\bar{P}_n$.

In the following, we provide two different one dimensional simulations. In both, the \verb|integrate| function found in the \verb|stats| package in R is used to numerically approximate the integral. In some instances, we consider distributions such as the uniform that admit a simple enough quantile function where analytic computation is possible. However, to be consistent across all examples, numerical integration is always used. In all simulations, we consider sample sizes $\log_2(n) \in \{2,4,6,8,10,12\}$, and for each distribution $100$ Monte Carlo samples are collected to estimate $\log_2( \mathbb{E}_{P_0} W_v(\bar{P}_n,P_0))$ and $\log_2(\mathbb{E}_{P_0} W_v(\hat{P}_n,P_0))$. For $\bar{P}_n$, two different priors are considered. The first is where all prior concentrations are zero; the second is when all prior concentrations are $n^{0.5-1/2v}$, thereby ensuring that the total prior concentration satisfies $\lesssim n^{-1/2}$ as is required by Theorem \ref{frequentistDwasser}. Finally, note that in all one dimensional simulations, estimates of the aforementioned quantities are supplemented with Delta method based $95\%$ confidence intervals.

\subsubsection*{Simulation 1 (exploring deviations from uniformity)}
The first simulation explores near uniform distributions using the Beta kernel. The data generating distributions considered are $P_0 \in \Beta(x,x)$ for $x \in \{0.7,0.9,1,1.1,1.3\}$ and $v \in \{1,2,3\}$. In these cases it is straightforward to show that $J_v(P_0) < \infty$. Therefore, the empirical measure achieves the standard convergence rate $n^{-1/2}$. While the exact convergence rate of $\bar{P}_n$ is unknown in these instances, we still expect to see similar performance since distributions ``close'' to the uniform should favor the histogram model by construction. Figure \ref{fig:1dBeta} displays comparisons between the empirical measure, its upper bound (``worst case''), the memory efficient histogram with zero prior concentration and the memory efficient histogram with prior concentrations equivalent to $n^{0.5-1/2v}$. This ``worst case'' upper bound for $\mathbb{E}_{P_0} W_v(\hat{P}_n,P_0)$ in one dimension follows from \citet[Theorem 3.2]{bobkov2019one} and an application of Jensen's inequality.
It is observed that $\bar{P}_n$ performs at least as well as the empirical measure at all sample sizes when $x \in \{0.9,1,1.1\}$. However, when the deviation from the the uniform distribution is larger, there are instances, for example with $v=3,x = 0.7, 1.3$, when the empirical measure outperforms $\bar{P}_n$ for sample sizes larger than $n = 2^8$.
\begin{figure}
    \centering
    \includegraphics[height=1.05\textheight]
{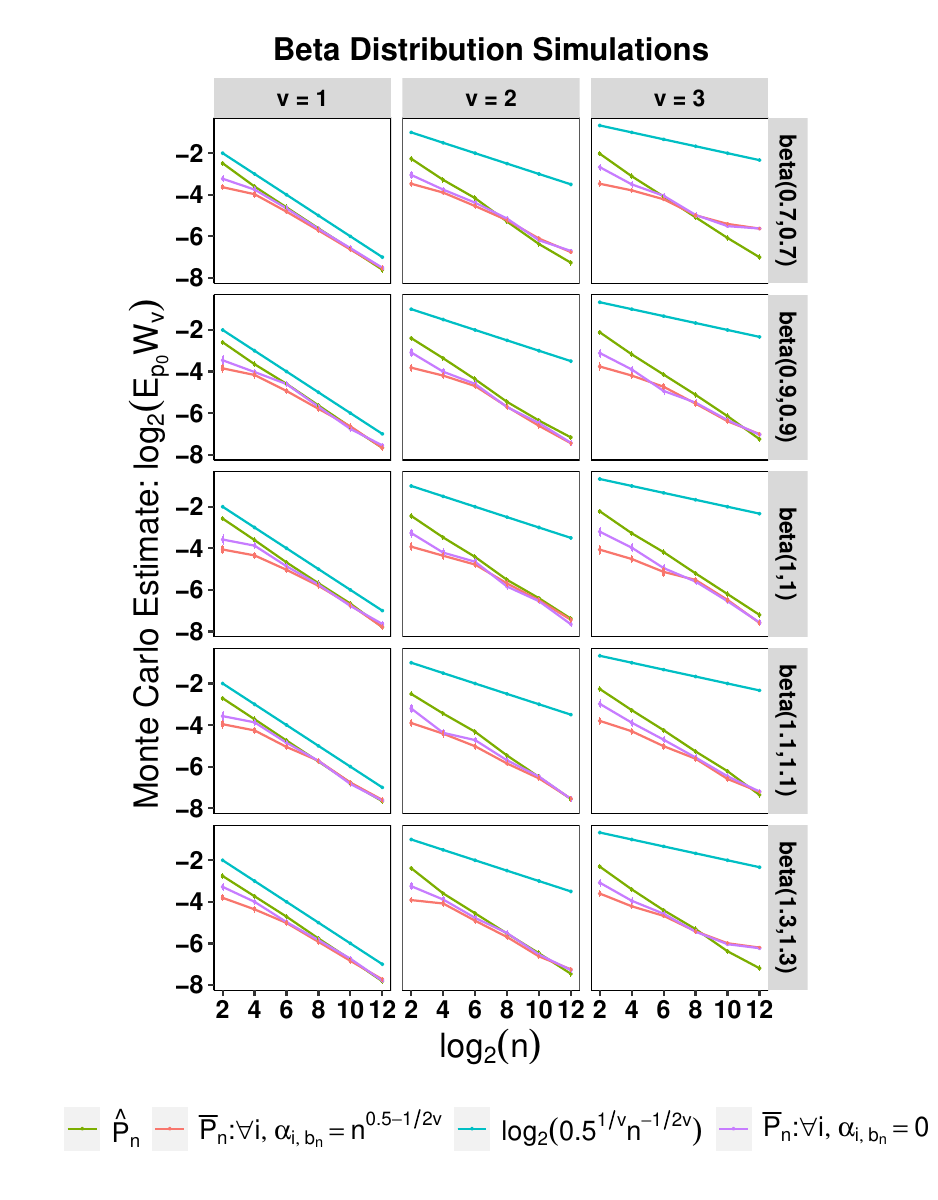}
    \caption{100 Monte Carlo Sample Estimate of $\log_2(\mathbb{E}_{P_0} W_v(\cdot,P_0))$ for the empirical measure $\hat{P}_n$ and for the memory efficient histogram $\bar{P}_n$ where $P_0 = \beta(x,x)$ for $x \in \{0.7,0.9,1,1.1,1.3\}$. $95\%$ confidence intervals computed using the Delta method are displayed. An upper bound for the worst case behavior of the empirical measure, $\log_2(0.5^{1/v} n^{-1/2v})$, is also plotted.}
    \label{fig:1dBeta}
\end{figure}

\subsubsection*{Simulation 2 (Exploring a distribution with disconnected support)}
In the second simulation study, we explore distributions that have a positive Lebesgue measure area of zero mass. To do so, we consider the class of densities $\mathcal{S} = \{p_{a,e}(x)| 0 < e < .5, 0 < a < 1/e^2, b = (1-ae^2)/2e \}$ where $p_{a,e}(x) \in \mathcal{S}$ follows
\begin{equation*}
p_{a,e}(x) = \mathbb{I}(0 \leq x < e) (ax+b)+\mathbb{I}(1-e \leq x < 1)(ax+b-(1-e)a).
\end{equation*}
From herein, we denote the $\split(a,e)$ distribution as that which admits density $p_{a,e}(x) \in \mathcal{S}$.

The \hbox{$e$} parameter controls the gap between positive and zero measure areas permitting exploration into whether increasing the size of the zero mass area changes performance of the methods under comparison. In particular, this gap increases as $e \to 0$. The $a$ parameter gives the slope of the density when it is non-zero, allowing exploration of how robust the histogram is to deviations from uniformity. Setting $a$ as large as possible gives a highly non-uniform density, while $a$ near zero gives uniformity in the regions possessing positive mass. Examples of different densities admitted by the Split distribution are shown in Figure \ref{fig:splitDistrExamples}. The second simulation explores $P_0 \in \split(a,e)$ for $(a,e) \in \{(a_i,e_j)\}$ for $i \in \{1,2,3\}$ and $j \in \{1,2\}$ where $e_1 = 0.1, e_2 = 0.27, a_1 = 0.5, a_2 = 2, a_3 = 13$. These results are displayed in Figure \ref{fig:splitDistrResults} and show comparisons with the same quantities as presented in Figure \ref{fig:1dBeta}. It is observed that across the board, the performance of $\bar{P}_n$ follows closely with the empirical measure and in many instances as can be seen in the $v=3$ setting, shows a consistent improvement across all studied $a$ values. Even when areas of positive mass have a highly nonuniform density, for example when $a = 13$, the memory efficient histogram still performs similarly and sometimes even better than the empirical measure.

In both simulations one and two, also observe that the width of the Delta method based $95\%$ confidence intervals are minuscule relative to the magnitude of decay in the error observed as the sample size increases.

\begin{figure}[h!]
    \includegraphics[scale=1.03]{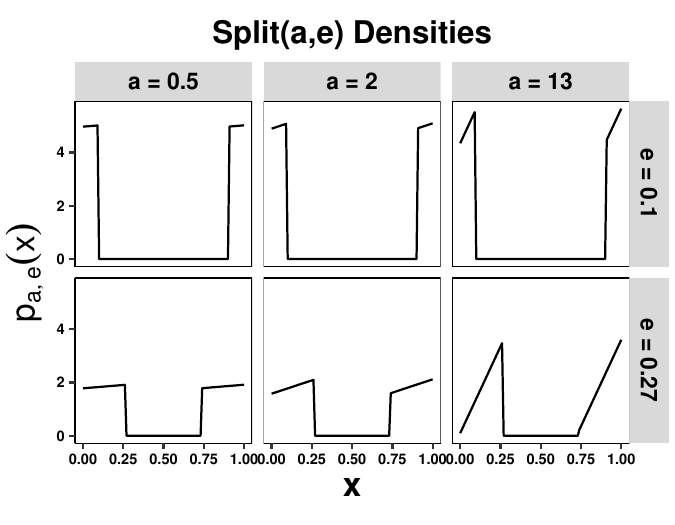}
    \centering
    \caption{ Densities from the $\split(a,e)$ distribution for the choices of $a,e$ used in simulations in Figure \ref{fig:splitDistrResults}.}
    \label{fig:splitDistrExamples}
\end{figure}

    

\begin{figure}[p]
\vspace*{-4cm}
\includegraphics[height=1.25\textheight]{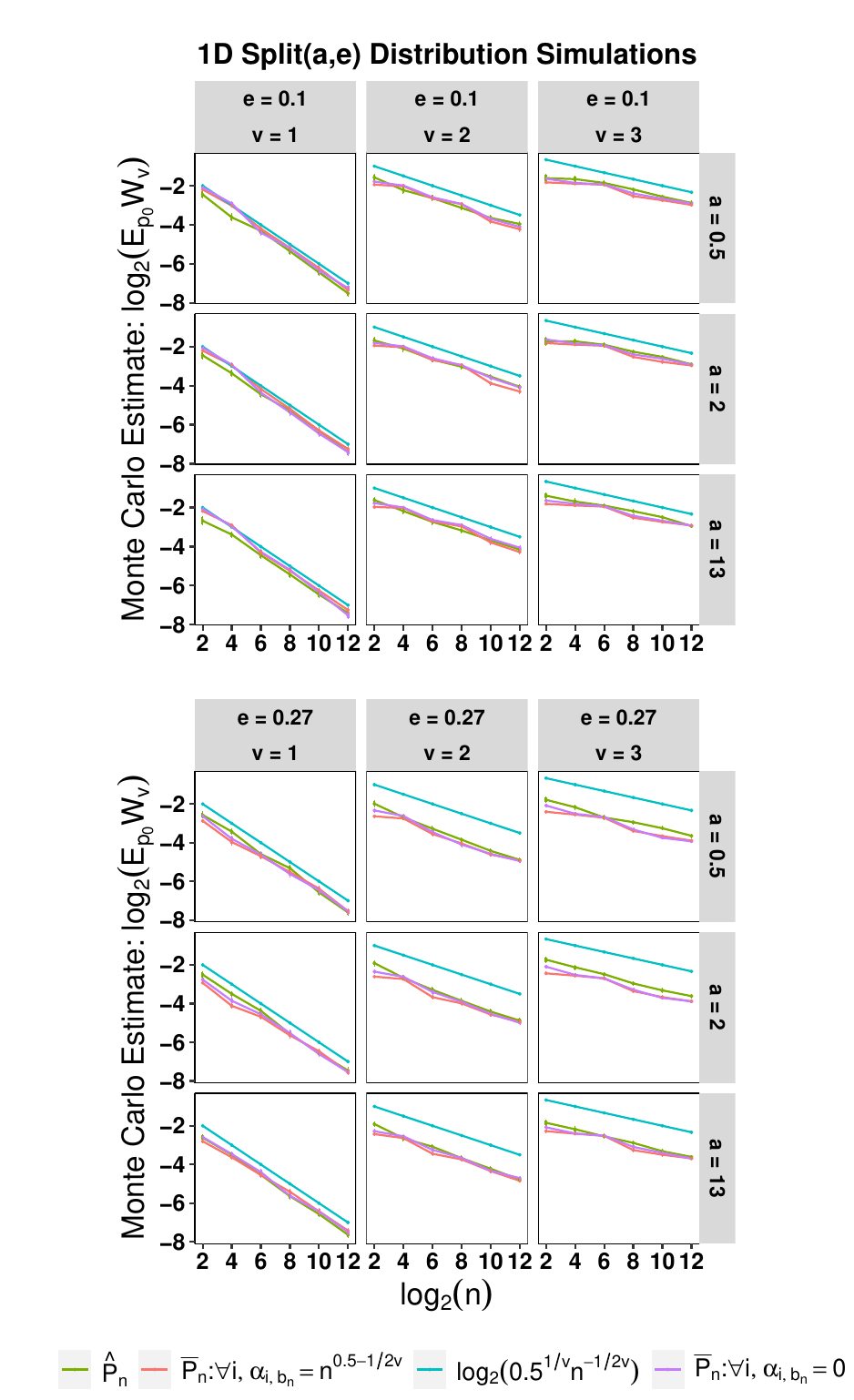}
    \centering
    \caption{100 Monte Carlo sample simulation of $\log_2(\mathbb{E}_{P_0} W_v(\cdot,P_0))$ for the empirical measure $\hat{P}_n$ and for the memory efficient histogram $\bar{P}_n$ where $P_0 =  \split(a,e)$ for various values of $a,e$. Figure \ref{fig:splitDistrExamples} provides visuals of these distributions. $95 \%$ confidence intervals based on the Delta method are displayed. An upper bound for the worst case behavior of the empirical measure, $\log_2(0.5^{1/v} n^{-1/2v})$, is also plotted.}
    \label{fig:splitDistrResults}
\end{figure}

\subsection{Two dimensional simulations}
In the two dimensional case, the convenient quantile function based integral expression for the Wasserstein distance does not exist. Computation of the Wasserstein--2 distance between a probability measure possessing a density and a discrete distribution is studied in the semi-discrete optimal transport work of \cite{merigot2011multiscale}. A more  experimentally efficient algorithm is presented in \cite{kitagawa2019convergence}, and an implementation in line with these works is provided in the \verb|pysdot| package in Python. 



\sloppy The sample sizes $\log_2(n) \in \{4,6,8,10,12\}$ are considered, and $100$ Monte Carlo samples taken directly from the continuous measure are used to estimate $\log_2(\mathbb{E}_{P_0} W_2(\bar{P}_n,P_0))$ and $\log_2(\mathbb{E}_{P_0} W_2(\hat{P}_n,P_0))$. 
To achieve the semi-discrete optimal transport setting, we use a 1000 sample empirical construction of \hbox{$P_0$} as ground truth for Wasserstein computations. Due to the simulation setup, therefore, the data generating distribution and the distribution utilized for Wasserstein computations are not identical. However, given the sample sizes under study, this does not prevent us from performing a meaningful analyses.

In addition to $\bar{P}_n$, we also study the behavior of the discretization of $\bar{P}_n$ obtained by placing all of the mass of each bin at its center. We call this estimator $\tilde{P}_n$. Theorem \ref{frequentistDwasser} also holds for $\tilde{P}_n$ and the discreteness of $\tilde{P}_n$ provides the practitioner with additional options if they intend to use the memory efficient histogram in computation. For comparison of two discrete measures, which occurs in simulations comparing $\hat{P}_n$ to $\tilde{P}_n$, we use the \verb|Transport| package in R \citep{schuhmacher2023package} which provides an implementation of discrete-discrete optimal transport in the \verb|wasserstein| function.

\subsubsection*{Simulations 3 and 4 (exploring the product uniform and product split distributions)}
In the following simulations, we consider the two dimensional uniform distribution and the product measure $\split(a = 2,e=0.27) \times \split(a=2,e=0.27)$ in the $v=2$ case. Results and comparisons are displayed in Figure \ref{fig:2dDistrResults}. It is immediately obvious that in both cases the non-discretized memory efficient histogram $\bar{P}_n$ outperforms the empirical measure $\hat{P}_n$. Moreover, we observe a penalty for discretization of $\bar{P}_n$ since $\hat{P}_n$ outperforms $\tilde{P}_n$. This penalty is more pronounced when estimating the smooth uniform distribution than it is when estimating the product $\split(a=2,e=0.27)$ distribution. 

For the product $\split(a=2,e=0.27)$ distribution, results for the zero prior memory efficient histogram are not displayed since semi-discrete optimal transport output from \verb|pysdot| is not reliable. This is because, as is well documented in \cite{kitagawa2019convergence}, convergence guarantees of the gradient based optimization routines used in semi-discrete optimal transport rely on an assumption about the connectedness of the support of the density. Due to these computational challenges, the prior serves a very practical purpose even in a frequentist analysis. By setting all prior concentrations positive, the histogram has convex support, resolving any issues with convergence that could arise due to a disconnected support. The interested reader is directed to \cite{peyre2019computational,kitagawa2019convergence} for a further discussion of semi-discrete optimal transport algorithms.

Finally, note that as in simulations one and two, in simulations three and four, the width of the $95\%$ confidence intervals produced using the Delta method are minuscule relative to the magnitude of decay in the error observed as the sample size increases.

\begin{figure}[h!]
    \includegraphics[scale=.5]{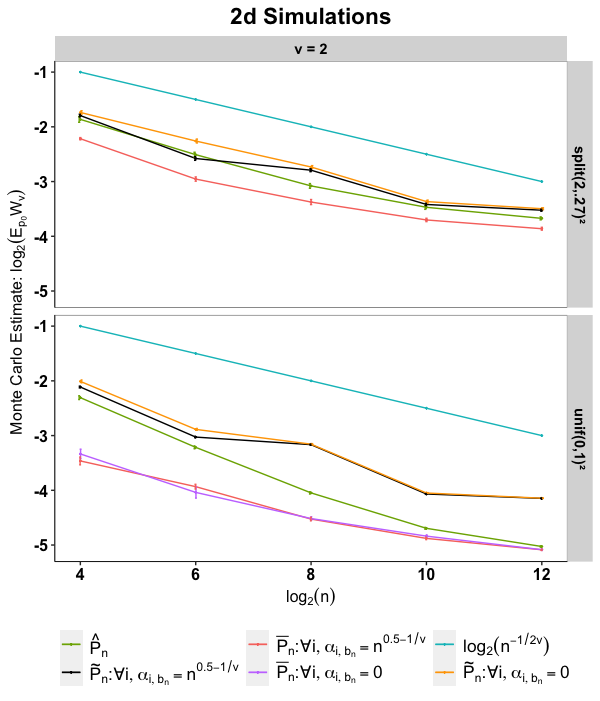}
    \centering
    \caption{100 Monte Carlo sample simulation of $\log_2(E_{P_0} W_2(\cdot,P_0))$ for the empirical measure $\hat{P}_n$ and for the memory efficient histogram $\bar{P}_n$ where $P_0$ is the 2 dimensional uniform or the product measure $\split(a=2,e=0.27) \times \split(a=2,e=0.27)$. $95 \%$ confidence intervals based on the Delta method are displayed. While Monte Carlo samples involve taking $n$ samples directly from the data generating measure, for Wasserstein computation, the data generating measures have been discretized using $1000$ random samples and this accounts for the  curves leveling off for large $n$. To give context for the rates observed, the $\log_2(n^{-1/2v})$ curve is also plotted.}
    \label{fig:2dDistrResults}
\end{figure}

\section{Conclusion}
\label{sec:conclusion}

On the metric space $([0,1]^d,\| \cdot - \cdot \|_p)$, we have proven upper bounds on the rate of posterior contraction for the posterior of the Bayes histogram and on the rate of the expected loss for an estimator derived from the Bayes histogram ($\bar{P}_n$), all under the Wasserstein distance ($W_v,1 \leq v < \infty)$.

These rates match existing minimax lower bounds at least up to logarithmic terms in the class $\mathcal{P}_d = \{ \text{Borel Probability Measures on } [0,1]^d \}$ and when $d < 2v$, in the Besov $\mathcal{B}_{p',q}^{s}(\tilde{L})$ space where $1 \leq p',q \leq \infty, s >0$ and $\tilde{L}$ sufficiently large. Our posterior contraction proofs take advantage of conjugacy instead of using the well-known three condition \cite{ghosal2000} approach. We therefore sidestep common challenges that may arise when dealing with metrics that either have a faster minimax convergence rate than under Kullback-Leibler, or are not dominated by the Hellinger metric.

Our results are of most practical interest in the $d < 2v$ case, where $\lesssim n^{d/2v}$ atoms are required to store $\bar{P}_n$ and $\Pi_n$. Compared to the empirical measure this is an $n^{1-d/2v}$ polynomial factor improvement. In the $d < 2v$ case, our frequentist histogram $\bar{P}_n$ is superior to the histogram $\ddot{P}_n$ put forth by \cite{niles2022minimax}, since unlike $\ddot{P}_n$, the construction of $\bar{P}_n$ is not dependent on knowledge of a regularity parameter $s$ of $P_0$, yet still obtains minimax optimality in the same regularity classes as $\ddot{P}_n$ at least up to logarithmic terms. Further, the memory footprint of $\bar{P}_n$ is always at least as small as that of $\ddot{P}_n$.

The $d < 2v$ case is practically important due to the popularity of the $W_1$ and $W_2$ metrics. This case covers $(d =1, v \geq 1)$, $(d=2,v=2)$ and $(d=3,v=2)$. In particular, by covering the $d=2$ and $d=3$ cases, it is possible to use $\bar{P}_n$ and $\Pi_n$ for image and video analysis respectively.

Potential applications of the memory efficient Bayes histogram include computationally memory constrained inference settings in which storing the entire data set in memory is not possible, as well as any of the myriad of problems (such as MWDE and Wasserstein ABC) where one needs a low atom representation of $P_0$ in order to reduce Wasserstein computation time. As observed in Sections \ref{sec:memEff} and \ref{sec:sims}, the prior plays an important role in facilitating Wasserstein computations using semi-discrete optimal transport.

An important avenue for future work will be to compare the rate of convergence in estimating a Wasserstein distance via the plug--in approach using the memory efficient histogram to other existing procedures for estimating a Wasserstein distance. One theoretical limitation of our work is that there are certain classes of distributions, such as those possessing densities that are bounded below or in the one-dimensional case those such that the constant $J_v(P_0) < \infty$, where estimators have been identified that achieve exactly or nearly the standard rate $n^{-1/2}$ for the distribution estimation problem. Another direction for future work, therefore, is to investigate whether the memory efficient histogram can also benefit from such an assumption. A practical limitation of our work is that in the streaming setting, one needs a conservative upper bound on the total stream size to perform inference along the stream. To deal with this in subsequent work, we may attempt to place a prior on the number of bins such that the memory efficiency property is maintained while the requirement to have some knowledge of the sample size $n$ before constructing the histogram, is removed.

\section*{Acknowledgements}
We would like to thank Bei Wang at the University of Utah for providing funding support under contract DOE DE-SC0021015.
This paper describes objective technical results and analysis. Any subjective views or opinions that might be expressed in the paper do not necessarily represent the views of the U.S. Department of Energy or the United States Government. This work was supported by the Laboratory Directed Research and Development program at Sandia National Laboratories, a multimission laboratory managed and operated by National Technology and Engineering Solutions of Sandia, LLC, a wholly-owned subsidiary of Honeywell International, Inc., for both the U.S. Department of Energy’s National Nuclear Security Administration under contract DE-NA0003525.

\clearpage 
\newpage
\appendix
\begin{center}
{\bf\Large Appendix}
\end{center}

\section{Proof of Result \ref{diricheletConcentration}}
\label{app:dConcentration}

\diricheletConcentration*
\begin{proof}
Basic properties of the Dirichlet distribution give that for $j \in \{1,2,\dots,k\}$, $\pi_j \sim \Beta(\alpha_j,\bar{\alpha}-\alpha_j)$. Also, if $X \sim \Beta(\alpha,\beta)$ then $\Var(X) = \alpha \beta/((\alpha+\beta)^2 (\alpha+\beta+1))$. Using these properties, in addition to Jensen's inequality and Cauchy--Schwarz inequality, we have that
\begin{align}
\mathbb{E} \left( \sum_{j=1}^{k} |\pi_j - \mathbb{E}(\pi_j) | \right) & \leq \sum_{j=1}^{k} \sqrt{\Var(\pi_j)} \nonumber \\
& = \sum_{j=1}^{k} \sqrt{ \frac{\alpha_j(\bar{\alpha} - \alpha_j)}{\bar{\alpha}^2(\bar{\alpha}+1)} } \nonumber \\
& \leq (\bar{\alpha})^{-\frac{3}{2}} \sum_{j=1}^{k} \sqrt{\alpha_j(\bar{\alpha}-\alpha_j)} \nonumber \\ 
& \leq (\bar{\alpha})^{-\frac{3}{2}} \sqrt{\left(\sum_{j=1}^{k} \alpha_j \right) \left(\sum_{j=1}^{k} \bar{\alpha} - \alpha_j \right)} \nonumber \\
& = (\bar{\alpha})^{-\frac{3}{2}} \sqrt{\bar{\alpha} (\bar{\alpha} k - \bar{\alpha})} \nonumber \\
& \leq (\bar{\alpha})^{-\frac{1}{2}}  \sqrt{k}.
\end{align}
By Markov the result follows.
\end{proof}

\section{Proofs of Theorems \ref{frequentistDwasser}, \ref{contractionAroundCentralEstimatorDwasser}, and \ref{concludingTheoremDwasser}}
\label{app:mainTheoremsProofs}

\subsubsection{Theorem \ref{frequentistDwasser}}

To apply the multiresolution upper bound given in Result \ref{wassersteinMultiRes}, we will need to pick a sequence of partitions and choose a depth to analyze.  By setting up the model space to be a sequence of nested dyadic histograms, it is natural to use the sequence of dyadic partitions of $[0,1)^d$. The following Lemma formally puts forth the partition sequence we will be using throughout the proofs. Part 1 of Lemma \ref{nestedPartitionDetails} establishes (the intuitively obvious fact) that the sequence of partitions we will use is nested, and the resolution of each partition is computed. Part 3 of Lemma \ref{nestedPartitionDetails} ensures that the partition at the depth of the model satisfies that sets in courser partitions are always just unions of sets from this finest partition. This will be useful in the proof of Theorem \ref{frequentistDwasser}. Part 2 of Lemma \ref{nestedPartitionDetails} is an intermediate result useful in proving part 3.

\begin{lemma}
\label{nestedPartitionDetails}
With $\mathcal{S}_0 := [0,1)^d$ and 
\[\mathcal{S}_k := \left\{ \left[\frac{i_1-1}{2^k},\frac{i_1}{2^k}\right) \times \left[\frac{i_2-1}{2^k},\frac{i_2}{2^k}\right) \times \dots \times \left[\frac{i_d-1}{2^k},\frac{i_d}{2^k}\right) \textit{ for } (i_1,i_2,\dots,i_d) \in [2^k]^d\right\}\] for $k \in \mathbb{N}$, the following holds
\begin{enumerate}
 \item for $k \in \{0,1,2,\dots,\}$, under the $1 \leq p < \infty$ norm $\| \cdot \|_p$, $Res(\mathcal{S}_k) = d^{1/p} 2^{-k}$. Also, for $k \in \{1,2,\dots,\}$, if $S' \in \mathcal{S}_k$, there is a $S \in \mathcal{S}_{k-1}$ such that $S' \subseteq S$ [establishing that for each $K \in \{1,2,\dots\}$, $\{ S_k \}_{k=0}^{K}$ is a \textit{sequence of nested partitions}]. Also, for any $S^* \in \mathcal{S}_{k-1}$, s.t $S^* \neq S, S' \cap S^* = \emptyset$.
 \item For each $n \in \mathbb{N}$ and each $k \in \{1,2,\dots,K_n\}$ if $S' \in \mathcal{S}_{K_n}$, then there is a $S \in \mathcal{S}_k$ such that $S' \subseteq S$ and for every $S^* \neq S$ such that $S^* \in \mathcal{S}_k$, $S' \cap S^* = \emptyset$
 \item For each $n \in \mathbb{N}$ and each $k \in \{1,2,\dots,K_n\}$ and each $S \in \mathcal{S}_k$, there exists a set $I_{S,k,n} \subseteq [b_n]^d$ and $S = \bigcup_{\pmb{j} \in I_{S,k,n}} A_{\pmb{j},b_n}$. Consequently $\{ \bigcup_{\pmb{j} \in I_{S,k,n}} A_{\pmb{j},b_n} \}_{S \in \mathcal{S}_k}$ partitions $[0,1)^d$ and $\{I_{S,k,n}\}_{S \in \mathcal{S}_k}$ partitions $[b_n]^d$.
\end{enumerate}
\end{lemma}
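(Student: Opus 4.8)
The plan is to prove the three parts of Lemma~\ref{nestedPartitionDetails} in order, since each part builds on the previous one, and all of it is an essentially bookkeeping exercise about dyadic cubes. Throughout, I will exploit the fundamental observation that the half-open dyadic intervals $[(i-1)/2^k, i/2^k)$ for $i \in [2^k]$ form a partition of $[0,1)$, and that the partition at level $k+1$ refines that at level $k$ because each interval $[(i-1)/2^k, i/2^k)$ is the disjoint union of $[(2i-2)/2^{k+1}, (2i-1)/2^{k+1})$ and $[(2i-1)/2^{k+1}, 2i/2^{k+1})$. Taking $d$-fold products of this one-dimensional fact gives everything about $\mathcal{S}_k$; the multi-index structure of $A_{\pmb{i},b}$ in Equation~\ref{AsetDefinition} is exactly a product of one-dimensional dyadic intervals when $b = 2^K$.

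For Part~1, the resolution computation is direct: every $S \in \mathcal{S}_k$ is a product of $d$ intervals each of length $2^{-k}$, so under $\|\cdot\|_p$ its diameter is $(\sum_{i=1}^d (2^{-k})^p)^{1/p} = d^{1/p} 2^{-k}$, and this is the same for every cell, so $\Res(\mathcal{S}_k) = d^{1/p} 2^{-k}$. For nestedness, given $S' \in \mathcal{S}_k$ with multi-index $(i_1,\dots,i_d) \in [2^k]^d$, set $S$ to be the cell of $\mathcal{S}_{k-1}$ with multi-index $(\ceil{i_1/2},\dots,\ceil{i_d/2}) \in [2^{k-1}]^d$; coordinatewise one checks $[(i_j-1)/2^k, i_j/2^k) \subseteq [(\ceil{i_j/2}-1)/2^{k-1}, \ceil{i_j/2}/2^{k-1})$, so $S' \subseteq S$. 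Disjointness of distinct cells of $\mathcal{S}_{k-1}$ from $S'$ then follows because the cells of $\mathcal{S}_{k-1}$ partition $[0,1)^d$ and $S'$ is nonempty and already contained in $S$.

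For Part~2, I would iterate Part~1. Fix $n$, $k \le K_n$, and $S' \in \mathcal{S}_{K_n}$. Applying the containment from Part~1 repeatedly (from level $K_n$ down to level $k$) produces a chain $S' = S_{K_n} \subseteq S_{K_n - 1} \subseteq \cdots \subseteq S_k$ with $S_j \in \mathcal{S}_j$; take $S := S_k$. Since $S' \subseteq S$ and $S'$ is nonempty, and the cells of $\mathcal{S}_k$ are pairwise disjoint (they partition $[0,1)^d$), any $S^* \in \mathcal{S}_k$ with $S^* \neq S$ satisfies $S^* \cap S = \emptyset$, hence $S^* \cap S' = \emptyset$. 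Alternatively one can argue directly with multi-indices: coordinatewise, $\ceil{\ceil{i/2}/2}=\ceil{i/4}$, etc., so the level-$k$ ancestor of the multi-index $(i_1,\dots,i_d)$ is $(\ceil{i_1/2^{K_n-k}},\dots,\ceil{i_d/2^{K_n-k}})$.

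For Part~3, recall $b_n = 2^{K_n}$, so the cells $A_{\pmb{j},b_n}$, $\pmb{j} \in [b_n]^d$, are exactly the cells of $\mathcal{S}_{K_n}$. Given $S \in \mathcal{S}_k$, define $I_{S,k,n} := \{\pmb{j} \in [b_n]^d : A_{\pmb{j},b_n} \subseteq S\}$. By Part~2, every $A_{\pmb{j},b_n}$ is contained in exactly one cell of $\mathcal{S}_k$; therefore the sets $\{I_{S,k,n}\}_{S \in \mathcal{S}_k}$ partition $[b_n]^d$. It remains to check $S = \bigcup_{\pmb{j} \in I_{S,k,n}} A_{\pmb{j},b_n}$: the inclusion $\supseteq$ is immediate from the definition of $I_{S,k,n}$, and $\subseteq$ holds because $\{A_{\pmb{j},b_n}\}$ covers $[0,1)^d$ and any $A_{\pmb{j},b_n}$ meeting $S$ must (again by Part~2, since $A_{\pmb{j},b_n}$ lies in a single $\mathcal{S}_k$-cell) be contained in $S$, hence have $\pmb{j} \in I_{S,k,n}$. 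The statement that $\{\bigcup_{\pmb{j} \in I_{S,k,n}} A_{\pmb{j},b_n}\}_{S \in \mathcal{S}_k}$ partitions $[0,1)^d$ is then just a restatement of the fact that $\mathcal{S}_k$ partitions $[0,1)^d$. I do not anticipate a genuine obstacle here; the only thing requiring a little care is keeping the half-open interval endpoints and the ceiling-function index arithmetic straight, and being explicit that $b_n = 2^{K_n}$ makes $\{A_{\pmb{j},b_n}\} = \mathcal{S}_{K_n}$ so that Part~2 can be applied with $S' = A_{\pmb{j},b_n}$.
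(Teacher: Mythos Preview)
Your proposal is correct and follows essentially the same approach as the paper: a direct diameter computation and explicit parent-index map for Part~1, iteration of Part~1 for Part~2, and the identical definition $I_{S,k,n}=\{\pmb{j}:A_{\pmb{j},b_n}\subseteq S\}$ together with the ``any finest cell meeting $S$ must lie in $S$'' argument for Part~3. The only cosmetic difference is that you phrase the Part~3 reverse inclusion directly while the paper phrases it by contradiction, and your ceiling-index formula $\ceil{i_j/2}$ is a cleaner way to write the paper's map $f_k$.
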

\begin{proof}
For (1), let $k \in \{0,1,2,\dots,\}$, and suppose $(i_1,i_2,\dots,i_d) \in [2^k]^d$ and $S' = \prod_{v =1}^{d}  [\frac{i_v-1}{2^k},\frac{i_v}{2^k})$. Then $diam(S') = (d 2^{-pk})^{1/p} = d^{1/p} 2^{-k}$. This establishes that $Res(\mathcal{S}_k) = d^{1/p} 2^{-k}$. Now for $k \in \{1,2,\dots\}$, define $f_k: \{1,2,\dots,2^k\} \to \{1,2,\dots,2^{k-1} \}$, by
\[
f_k(i) := \sum_{j = 1}^{2^{k-1}} j *\mathbb{I} \left(\frac{i}{2^k} \in [\frac{j-1}{2^{k-1}},\frac{j}{2^{k-1}} ) \right).
\]
Then for $v \in \{1,2,\dots,d\}$, $\left[\frac{i_v-1}{2^k},\frac{i_v}{2^k}\right) \subseteq \left[\frac{f_k(i_v)-1}{2^{k-1}},\frac{f_k(i_v)}{2^{k-1}} \right)$. In particular, 
\[
S' = \prod_{v=1}^{d} \left[\frac{i_v-1}{2^k},\frac{i_v}{2^k}\right) \subseteq \prod_{v=1}^{d} \left[\frac{f_k(i_v)-1}{2^{k-1}},\frac{f_k(i_v)}{2^{k-1}}\right) := S \in \mathcal{S}_{k-1}.
\]
Now consider $S^* \neq S$, $S^* \in \mathcal{S}_{k-1}$. Since the members of $\mathcal{S}_{k-1}$ are disjoint, we have that $S \cap S^* = \emptyset$ and $S' \subseteq S$. Therefore $S' \cap S^* = \emptyset$. This completes (1). 

For (2), let $n \in \mathbb{N}$; we will do a proof by induction on the values $k \in \{1,2,\dots,K_n\}$. The base case $k = K_n$ is clear since $\mathcal{S}_{K_n}$ is a partition. Now suppose for some $j \in \{0,1,2,K_n-2\}$, we have that for every $S' \in \mathcal{S}_{K_n}$, there is a $S \in \mathcal{S}_{K_n-j}$ such that $S' \subseteq S$ and for every $S^* \in \mathcal{S}_{K_n-j}$ such that $S^* \neq S$, $S^* \cap S' = \emptyset$. Applying (1) with $k = K_n-j-1$ yields that for some $U \in \mathcal{S}_{K_n-j-1}$, $S' \subseteq U$. Since $\mathcal{S}_{K_n-j-1}$ is a partition and in particular consists of disjoint sets, we also have that for all $U^* \in \mathcal{S}_{K_n-j-1}$ such that $U^* \neq U$, $S' \cap U^* = \emptyset$. This argument applies for all $S' \in \mathcal{S}_{K_n}$ and this completes the inductive step. So by induction we conclude (2). 

For (3), let $n \in \mathbb{N}$; recall that by definition of the $A_{\pmb{j},b_n}$ for $\pmb{j} \in [b_n]^d$, we have that $\mathcal{S}_{K_n} = \{ A_{\pmb{j},b_n} \}_{\pmb{j} \in [b_n]^d}$. So let $k \in \{1,2,\dots,K_n\}$ and suppose $S \in \mathcal{S}_{k}$. Further, let $I_{S,k,n} := \{ \pmb{j} \in [b_n]^d: A_{\pmb{j},b_n} \subseteq S \}$. It is clear that $\bigcup_{\pmb{j} \in I_{S,k,n}} A_{\pmb{j},b_n} \subseteq S$. Now for sake of contradiction suppose $S$ is not contained in $\bigcup_{\pmb{j} \in I_{S,k,n}} A_{\pmb{j},b_n}$. Then since $\mathcal{S}_{K_n}$ partitions $[0,1)^d$ and $S \subseteq [0,1)^d$, there must exists a $\pmb{j^*} \in [b_n]^d$, $\pmb{j^*} \notin I_{S,k,n}$ such that $A_{\pmb{j^*},b_n} \cap S \neq \emptyset$. Since $A_{\pmb{j^*},b_n} \nsubseteq S$, by (2), there exists a $Q \in \mathcal{S}_k$ such that $Q \neq S$ and $A_{\pmb{j^*},b_n} \subseteq Q$. Therefore, $S \cap Q \neq \emptyset$. This is a contradiction since $\mathcal{S}_k$ is a partition and $Q,S \in \mathcal{S}_k$. Thus we conclude $S \subseteq \bigcup_{\pmb{j} \in I_{S,k,n}} A_{\pmb{j},b_n}$ and in particular $S = \bigcup_{\pmb{j} \in I_{S,k,n}} A_{\pmb{j},b_n}$. 

As $\mathcal{S}_k$ is a partition of $[0,1)^d$, and for each $S \in \mathcal{S}_k$, $S = \bigcup_{\pmb{j} \in I_{S,k,n}} A_{\pmb{j},b_n}$, we conclude that $\{ \bigcup_{\pmb{j} \in I_{S,k,n}} A_{\pmb{j},b_n} \}_{S \in \mathcal{S}_k}$ partitions $[0,1)^d$. 

Finally, to show that $\{I_{S,k,n}\}_{S \in \mathcal{S}_k}$ partitions $[b_n]^d$, consider $S_1 \neq S_2$, $S_1,S_2 \in \mathcal{S}_k$. Since $\mathcal{S}_k$ is a partition, $S_1 \cap S_2 = \emptyset$. By definition of $I_{S_1,k,n}$, we have that for any $\pmb{j} \in I_{S_1,k,n}$, $A_{\pmb{j},b_n} \subset S_1$. Therefore $A_{\pmb{j},b_n} \cap S_2 = \emptyset$ and in particular $\pmb{j} \notin I_{S_2,k,n}$. Thus $I_{S_1,k,n} \cap I_{S_2,k,n} = \emptyset$. Thus $\{I_{S,k,n}\}_{S \in \mathcal{S}_k}$ is a disjoint collection of sets. Also for $\pmb{j} \in [b_n]^d$, since $A_{\pmb{j},b_n} \in \mathcal{S}_{K_n}$, applying (2) with $S' = A_{\pmb{j},b_n}$ implies that there is an $S \in \mathcal{S}_k$ such that $A_{\pmb{j},b_n} \subseteq S$. Thus $\pmb{j} \in I_{S,k,n}$. Therefore $[b_n]^d \subset \bigcup_{S \in \mathcal{S}_k} I_{S,k,n}$ and in particular $\bigcup_{S \in \mathcal{S}_k} I_{S,k,n} = [b_n]^d$. Thus we conclude $\{I_{S,k,n}\}_{S \in \mathcal{S}_k}$ partitions $[b_n]^d$.
\end{proof}

By Lemma \ref{nestedPartitionDetails} part 1 and Result \ref{wassersteinMultiRes}, we have that on the metric space $([0,1)^d,\| \cdot \|_{p})$ with $1 \leq p < \infty$ where $\mathcal{S}_0 := [0,1)^d$ and for $k \in \mathbb{N}$
\[
\mathcal{S}_k :=  \left\{ \left[\frac{i_1-1}{2^k},\frac{i_1}{2^k}\right) \times \left[\frac{i_2-1}{2^k},\frac{i_2}{2^k}\right) \times \dots \times \left[\frac{i_d-1}{2^k},\frac{i_d}{2^k}\right) \textit{ for } (i_1,i_2,\dots,i_d) \in [2^k]^d \right\},
\]
\begin{equation}
\label{multires}
W_v^v(\mu,\nu) \leq d^{v/p} \left( \left(\frac{1}{2}\right)^{K v} + \sum_{k=1}^{K} \left(\frac{1}{2}\right)^{(k-1) v} \sum_{S \in \mathcal{S}_k} |\mu(S) - \nu(S)| \right).
\end{equation}
for each $K \geq 1$ (and $1 \leq v < \infty$).
We are now ready to prove Theorem \ref{frequentistDwasser}, where we will use Lemma \ref{nestedPartitionDetails}, Equation \ref{multires}, and multinomial concentration (the technical tool stated in Result \ref{multinomialConcentration}).

\frequentistDwasser*
\begin{proof}
Applying Equation \ref{multires}, we have that
\begin{equation}
\label{firstMultiRes}
\mathbb{E}_{p_0} W_v^v(P_0,\bar{P}_n) \lesssim \\ \left(\frac{1}{2}\right)^{K_n v} +\sum_{k=1}^{K_n} \left(\frac{1}{2}\right)^{(k-1) v} \mathbb{E}_{p_0} \sum_{S \in \mathcal{S}_k} |\bar{P}_n(S) - P_0(S)|. 
\end{equation}
Now for $\pmb{j} = (j_1,j_2,\dots,j_d) \in [b_n]^d$, recall that 
\[A_{\pmb{j},b_n} = \left[\frac{j_1-1}{b_n},\frac{j_1}{b_n} \right) \times \left[\frac{j_2-1}{b_n},\frac{j_2}{b_n}\right) \times \dots \times \left[\frac{j_d-1}{b_n},\frac{j_d}{b_n}\right)\] and that $b_n = 2^{\ceil{\log_2(k_n)}} = 2^{K_n}$ and that $\mathcal{S}_{K_n} = \{ A_{\pmb{j},b_n} \}_{\pmb{j} \in [b_n]^d}$. \\


%
By Lemma \ref{nestedPartitionDetails} part (3), we have that for each $k \in \{1,2,\dots,K_n\}$, and each $S \in \mathcal{S}_k$, there is a set $I_{S,k,n} \subseteq [b_n]^d$, such that
\[S = \bigcup_{\pmb{j} \in I_{S,k,n}} A_{\pmb{j},b_n}.\] Additionally, by Lemma \ref{nestedPartitionDetails} part (3), $\{ \bigcup_{\pmb{j} \in I_{S,k,n}} A_{\pmb{j},b_n} \}_{S \in \mathcal{S}_k}$ partitions $[0,1)^d$ and $\{I_{S,k,n}\}_{S \in \mathcal{S}_k}$ partitions $[b_n]^d$. Using these equivalent forms for the $S$ sets and Equation \ref{firstMultiRes}, we have that
\begin{equation}
\label{secondMultiRes}
\mathbb{E}_{p_0} W_v^v(P_0,\bar{P}_n) \lesssim \left(\frac{1}{2}\right)^{K_n v} +\sum_{k=1}^{K_n} \left(\frac{1}{2}\right)^{(k-1) v} \mathbb{E}_{p_0} \sum_{S \in \mathcal{S}_k} \left|\bar{P}_n(\bigcup_{\pmb{j} \in I_{S,k,n}} A_{\pmb{j},b_n}) - P_0(S) \right|. 
\end{equation}
Now by definition of $\bar{P}_n$ (see Equation \ref{posteriorMeanHistogram}) note that for $\pmb{j} \in [b_n]^d$, $\bar{P}_n(A_{\pmb{j},b_n}) = \frac{\alpha_{\pmb{j},b_n}^*}{\sum_{\pmb{i} \in [b_n]^d} \alpha_{\pmb{i},b_n}^*} = \frac{\alpha_{\pmb{j},b_n} + \sum_{t=1}^{n} \mathbb{I}(Y_t \in A_{\pmb{j},b_n})}{n+ \sum_{\pmb{i} \in [b_n]^d} \alpha_{\pmb{i},b_n}}$ and recall the $A_{\pmb{j},b_n}$ are disjoint. Therefore, for each $k \in \{1,2,\dots,K_n\}$, and $S \in \mathcal{S}_k$, we have 
{\allowdisplaybreaks
\begin{align} \label{innerSummand}
\left|\bar{P}_n(\bigcup_{\pmb{j} \in I_{S,k,n}} A_{\pmb{j},b_n}) - P_0(S)\right| & = 
\left|\sum_{\pmb{j} \in I_{S,k,n}}  \frac{\alpha_{\pmb{j},b_n} + \sum_{t=1}^{n} \mathbb{I}(Y_t \in A_{\pmb{j},b_n})}{n+ \sum_{\pmb{i} \in [b_n]^d} \alpha_{\pmb{i},b_n}} - P_0(S) \right| \nonumber \\
& = \left| \frac{\sum_{\pmb{j} \in I_{S,k,n}} \alpha_{\pmb{j},b_n} + \sum_{t=1}^{n} \mathbb{I}(Y_t \in \bigcup_{\pmb{j} \in I_{S,k,n}} A_{\pmb{j},b_n})}{n + \sum_{\pmb{i} \in [b_n]^d} \alpha_{\pmb{i},b_n} } - P_0(S)\right| \nonumber \\
& \leq \left|\frac{n}{n+ \sum_{\pmb{i} \in [b_n]^d} \alpha_{\pmb{i},b_n} } \frac{\sum_{t=1}^{n} \mathbb{I}(Y_t \in S)}{n} - P_0(S)\right| \nonumber \\
&\phantom{{}=aaaaa} + \frac{\sum_{\pmb{j} \in I_{S,k,n}} \alpha_{\pmb{j},b_n} }{ n+\sum_{\pmb{i} \in [b_n]^d} \alpha_{\pmb{i},b_n} } \nonumber \\
& \leq \left|\frac{n}{n+ \sum_{\pmb{i} \in [b_n]^d} \alpha_{\pmb{i},b_n} } -1\right| \frac{\sum_{t=1}^{n} \mathbb{I}(Y_t \in S)}{n} + \nonumber \\
& \phantom{{}=aaaaa}+\left|\frac{\sum_{t=1}^{n} \mathbb{I}(Y_t \in S)}{n} - P_0(S)\right| + \frac{\sum_{\pmb{j} \in I_{S,k,n}} \alpha_{\pmb{j},b_n} }{n+ \sum_{\pmb{i} \in [b_n]^d} \alpha_{\pmb{i},b_n} }.
\end{align}
}

For the first term appearing in the last line of Equation \ref{innerSummand}, recall again that $\mathcal{S}_k$ partitions $[0,1)^d$ and $P_0$ is on $[0,1)^d$. Therefore
\begin{align}
\label{firstTermWassFreq}
\mathbb{E}_{p_0} \sum_{S \in \mathcal{S}_k} \left|\frac{n}{n + \sum_{\pmb{i} \in [b_n]^d} \alpha_{\pmb{i},b_n} } - 1\right| \frac{\sum_{t=1}^{n} \mathbb{I}(Y_t \in S) }{n} & = \frac{\sum_{\pmb{i} \in [b_n]^d} \alpha_{\pmb{i},b_n} }{n + \sum_{\pmb{i} \in [b_n]^d} \alpha_{\pmb{i},b_n} } \sum_{S \in \mathcal{S}_k} P_0(S) \nonumber \\
& = \frac{\sum_{\pmb{i} \in [b_n]^d} \alpha_{\pmb{i},b_n} }{n + \sum_{\pmb{i} \in [b_n]^d} \alpha_{\pmb{i},b_n} }. 
\end{align}
For the second term appearing in the last line of Equation \ref{innerSummand}, recall again that $\{ S \}_{S \in \mathcal{S}_k}$ partitions $[0,1)^d$. Therefore, $\left( \sum_{t=1}^{n} \mathbb{I}(Y_t \in S)\right)_{S \in \mathcal{S}_k} \sim \Mult\left(n, P_0(S) \right)_{S \in \mathcal{S}_k}$. So applying the multinomial concentration Result \ref{multinomialConcentration}, we have that
\begin{equation}
\label{secondTermWassFreq}
\mathbb{E}_{p_0} \sum_{S \in \mathcal{S}_k} \left|\frac{\sum_{t=1}^{n} \mathbb{I}(Y_t \in S)}{n} - P_0(S)\right| \leq n^{-1/2} \sqrt{|\mathcal{S}_k|}.
\end{equation}
For the third term appearing in the last line of Equation \ref{innerSummand}, recall again that $\{I_{S,k,n} \}_{S \in \mathcal{S}_k}$ partitions $[b_n]^d$, and therefore
\begin{equation}
\label{thirdTermWassFreq}
\mathbb{E}_{p_0} \sum_{S \in \mathcal{S}_k} \frac{\sum_{\pmb{j} \in I_{S,k,n}} \alpha_{\pmb{j},b_n} }{n+ \sum_{\pmb{i} \in [b_n]^d} \alpha_{\pmb{i},b_n} } = \frac{\sum_{\pmb{i} \in [b_n]^d} \alpha_{\pmb{i},b_n} }{n + \sum_{\pmb{i} \in [b_n]^d} \alpha_{\pmb{i},b_n}}.
\end{equation}

Using Equations \ref{secondMultiRes}, \ref{innerSummand}, \ref{firstTermWassFreq}, \ref{secondTermWassFreq}, and \ref{thirdTermWassFreq}, we have that
\begin{equation}
\label{thirdMultiRes}
\mathbb{E}_{p_0} W_v^v(P_0,\bar{P}_n) \lesssim \left(\frac{1}{2}\right)^{K_n v} + \frac{\sum_{\pmb{i} \in [b_n]^d} \alpha_{\pmb{i},b_n} }{n + \sum_{\pmb{i} \in [b_n]^d} \alpha_{\pmb{i},b_n}} \sum_{k=1}^{K_n} \left(\frac{1}{2}\right)^{(k-1)v} + n^{-1/2} \sum_{k=1}^{K_n} \left(\frac{1}{2}\right)^{(k-1)v} \sqrt{|\mathcal{S}_k|}.
\end{equation}
Using that $K_n = \ceil{\log_2(k_n)}$ to upper bound the first term appearing in Equation \ref{thirdMultiRes}, and that for every $v \geq 1$ the summand appearing in the second term in Equation \ref{thirdMultiRes} is a partial sum of a convergent geometric series to upper bound the second term appearing in Equation \ref{thirdMultiRes} and that $|\mathcal{S}_k| = 2^{kd}$ for $k \in \{1,2,\dots,K_n\}$ to bound the third term appearing in Equation \ref{thirdMultiRes}, we have that
\begin{equation}
\label{fourthMultiRes}
\mathbb{E}_{p_0} W_v^v(P_0,\bar{P}_n) \lesssim k_n^{-v} + \frac{\sum_{\pmb{i} \in [b_n]^d} \alpha_{\pmb{i},b_n} }{n + \sum_{\pmb{i} \in [b_n]^d} \alpha_{\pmb{i},b_n}} + n^{-1/2}\sum_{k=1}^{K_n} 2^{k(\frac{d}{2}-v)}. 
\end{equation}

Now we consider the 3 different cases. In each case we will use that by Jensen's inequality, $\mathbb{E}_{p_0} W_v(P_0,\bar{P}_n) \lesssim (\mathbb{E}_{p_0} W_v^v(P_0,\bar{P}_n) )^{1/v}$ \\

For the first case ($d < 2v$), note the summand of the last term in Equation \ref{fourthMultiRes} is a partial sum of a convergent geometric series. Therefore using that $\sum_{\pmb{j} \in [b_n]^d} \alpha_{\pmb{j},b_n} \lesssim n^{1/2}$, and that $k_n = n^{1/(2v)}$, we have that
\begin{equation}
\mathbb{E}_{p_0} W_v^v(P_0,\bar{P}_n) \lesssim k_n^{-v} +n^{-1/2}\lesssim n^{-1/2}.
\end{equation}

For the second case ($d = 2v$), note the summand of the last term in Equation \ref{fourthMultiRes} is $\leq K_n$. Therefore using that $K_n = \ceil{\log_2(k_n)}$, and that $\sum_{\pmb{j} \in [b_n]^d} \alpha_{\pmb{j},b_n} \lesssim n^{1/2}$, and that $k_n = n^{1/(2v)}$, we have that
\begin{equation}
\mathbb{E}_{p_0} W_v^v(P_0,\bar{P}_n) \lesssim k_n^{-v} +n^{-1/2}+ n^{-1/2} K_n \lesssim n^{-1/2} \log(n).
\end{equation}

For the third case, ($d > 2v$), note first that for any $q > 0$, 
\begin{equation}
\label{convenientGeoSeries}
\sum_{j=1}^{K_n} 2^{k q} \leq \frac{2^{q}}{2^q -1} [(2^q)^{K_n} -1] \lesssim 2^{q K_n}.
\end{equation}
Applying this with $q = \frac{d}{2} -v$ and using that $k_n = n^{1/d}$ and that $\sum_{\pmb{j} \in [b_n]^d} \alpha_{\pmb{j},b_n} \lesssim n^{1-\frac{v}{d}}$ and that $K_n = \ceil{\log_2(k_n)}$, we have that
\begin{equation}
\mathbb{E}_{p_0} W_v^v(P_0,\bar{P}_n) \lesssim k_n^{-v} + n^{-\frac{v}{d}} + n^{-1/2} 2^{(\frac{d}{2} - v) K_n} \lesssim n^{-v/d} +n^{-1/2} n^{(\frac{1}{d}) (\frac{d}{2} -v)} \lesssim n^{-\frac{v}{d}}.
\end{equation}

\end{proof}

\subsubsection{Theorem \ref{contractionAroundCentralEstimatorDwasser}}

We now intend to prove a statement about how the sequence of posterior distributions contracts around the posterior mean histogram sequence $\bar{P}_n$. To do so, we will be using the concentration of the Dirichlet distribution in the $\| \cdot \|_{1}$ distance around its mean (see Result \ref{diricheletConcentration}). To use this concentration the following preliminary Lemma will be helpful.

\begin{lemma}
\label{closeInParamHelper}
Define 
\[
k_n := \begin{cases}
n^{\frac{1}{2v}} & d \leq 2v. \\
n^{\frac{1}{d}} & d > 2v.
\end{cases}
\]
For $n \in \mathbb{N}$, $k \in \{1,2,\dots,K_n\}$ and $S \in \mathcal{S}_k$, let $I_{S,k,n}$ be defined as in the proof of Lemma \ref{nestedPartitionDetails} part 3. For $\gamma > 1$, if $\pmb{\pi}_1,\pmb{\pi}_2 \in \mathcal{S}^{b_n d-1}$ and for each $k \in \{1,2,\dots,K_n\}$, 
\[\sum_{S \in \mathcal{S}_k} | \sum_{\pmb{j} \in I_{S,k,n}} \pi_{1\pmb{j}} - \sum_{\pmb{j} \in I_{S,k,n}} \pi_{2\pmb{j}} | \leq \log^{\gamma}(n) \sqrt{\frac{2^{dk} }{n+\sum_{\pmb{j} \in [b_n]^d} \alpha_{\pmb{j},b_n}}} \]
and if $\alpha_{\pmb{j},b_n} > 0$ for each $\pmb{j} \in [b_n]^d$, then
 \begin{equation}
 \sum_{k=1}^{K_n} (\frac{1}{2})^{k v} \sum_{S \in \mathcal{S}_k} | \sum_{\pmb{j} \in I_{S,k,n}} \pi_{1\pmb{j}} - \sum_{\pmb{j} \in I_{S,k,n}} \pi_{2\pmb{j}} | \leq 
 \begin{cases}
 C_1(d,v) n^{-\frac{1}{2}} \log^{\gamma}(n)& d < 2v, \\
C_2(d,v) n^{-\frac{1}{2}} \log^{1+\gamma}(n)	& d = 2v,\\
 C_3(d,v) \log^{\gamma}(n) n^{-\frac{v}{d}}	& d > 2v,
 \end{cases}
 \end{equation}
 where
 \begin{align*}
C_1(d,v) &\geq \frac{2^{(\frac{d}{2}-v)}}{1-2^{(\frac{d}{2}-v)}}, &
C_2(d,v) &\geq \frac{2}{d} \frac{1}{\log(2)}, &
C_3(d,v) \geq \frac{2^{2(\frac{d}{2}-v)}}{2^{(\frac{d}{2}-v)}-1}.
 \end{align*}
\end{lemma}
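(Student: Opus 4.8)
The plan is to reduce the weighted sum to a single geometric-type series in the level index $k$ and estimate it in the three regimes $d<2v$, $d=2v$, $d>2v$. First I would insert the per-level hypothesis into the sum: multiplying
\[
\sum_{S \in \mathcal{S}_k} \Bigl| \sum_{\pmb{j} \in I_{S,k,n}} \pi_{1\pmb{j}} - \sum_{\pmb{j} \in I_{S,k,n}} \pi_{2\pmb{j}} \Bigr| \le \log^{\gamma}(n)\, \sqrt{\frac{2^{dk}}{\,n + \sum_{\pmb{j} \in [b_n]^d}\alpha_{\pmb{j},b_n}\,}}
\]
by $(1/2)^{kv}$, summing over $k \in \{1,\dots,K_n\}$, and using $\alpha_{\pmb{j},b_n} > 0$ so that $n + \sum_{\pmb{j}}\alpha_{\pmb{j},b_n} \ge n$, I obtain
\[
\sum_{k=1}^{K_n}\Bigl(\tfrac12\Bigr)^{kv}\sum_{S \in \mathcal{S}_k}\Bigl| \sum_{\pmb{j} \in I_{S,k,n}} \pi_{1\pmb{j}} - \sum_{\pmb{j} \in I_{S,k,n}} \pi_{2\pmb{j}} \Bigr| \le \frac{\log^{\gamma}(n)}{\sqrt{n}}\, T_n, \qquad T_n := \sum_{k=1}^{K_n} 2^{k(\frac{d}{2}-v)} .
\]
Everything then rests on bounding $T_n$, and the sign of $\tfrac d2 - v$ dictates the three cases.

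Next I would treat the cases. When $d<2v$ the exponent is negative, so $T_n \le \sum_{k=1}^{\infty} 2^{k(\frac d2 - v)} = \tfrac{2^{\frac d2 - v}}{1 - 2^{\frac d2 - v}}$; multiplying by $\log^\gamma(n)/\sqrt n$ gives the first line with $C_1(d,v) = \tfrac{2^{\frac d2 - v}}{1 - 2^{\frac d2 - v}}$ (any larger value also works). Note that here the truncation level $K_n$ — hence $k_n$ — is irrelevant because the series converges. When $d=2v$ every term of $T_n$ equals $1$, so $T_n = K_n = \lceil \log_2 k_n \rceil \le \tfrac1d \log_2 n + 1$, which for $n$ large is at most $\tfrac{2}{d\log 2}\log n$; this produces the extra $\log(n)$ factor and $C_2(d,v) = \tfrac{2}{d\log 2}$. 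When $d>2v$ the exponent is positive; the finite geometric sum identity (cf. Equation \ref{convenientGeoSeries}) gives $T_n \le \tfrac{2^{\frac d2 - v}}{2^{\frac d2 - v} - 1}\, 2^{(\frac d2 - v) K_n}$, and since $K_n \le \tfrac1d \log_2 n + 1$ and $\tfrac d2 - v > 0$ we have $2^{(\frac d2 - v)K_n} \le 2^{\frac d2 - v}\, n^{\frac 12 - \frac vd}$; multiplying by $\log^\gamma(n)/\sqrt n$ cancels the $n^{1/2}$ and leaves $\log^\gamma(n)\,n^{-v/d}$, with $C_3(d,v) = \tfrac{2^{2(\frac d2 - v)}}{2^{\frac d2 - v} - 1}$.

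The only delicate point — and it is routine — is the constant bookkeeping in the $d=2v$ and $d>2v$ cases, where one must pass from $\lceil \log_2 k_n \rceil$ to an explicit multiple of $\log n$ (respectively of $n^{1/2 - v/d}$). This is exactly where the asymptotic meaning of $\lesssim$ is invoked: the freedom to enlarge constants and to restrict to $n \ge N$, together with $k_n = n^{1/d}$ in the regime $d > 2v$ and $k_n = n^{1/2v}$ in the regime $d = 2v$. One then simply checks that the resulting constants meet the stated lower bounds on $C_1,C_2,C_3$. Beyond this, there is no substantive obstacle: the lemma is a direct substitution followed by a case split on a geometric series.
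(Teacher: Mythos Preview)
Your proposal is correct and follows essentially the same approach as the paper: substitute the per-level hypothesis, use positivity of the $\alpha_{\pmb{j},b_n}$ to drop to $n^{-1/2}$, reduce to the geometric sum $T_n = \sum_{k=1}^{K_n} 2^{k(\frac d2 - v)}$, and then split into the three cases exactly as the paper does (convergent series for $d<2v$, $T_n = K_n$ for $d=2v$, and the finite geometric sum bound via Equation~\ref{convenientGeoSeries} for $d>2v$). Your handling of the ceiling in $K_n$ and the constant bookkeeping mirrors the paper's reasoning precisely.
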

\begin{proof}
By the assumed closeness for each $k \in \{1,2,\dots,K_n\}$ we have that
\begin{equation}
\label{firstUpper}
 \sum_{k=1}^{K_n} (\frac{1}{2})^{k v} \sum_{S \in \mathcal{S}_k} | \sum_{\pmb{j} \in I_{S,k,n}} \pi_{1\pmb{j}} - \sum_{\pmb{j} \in I_{S,k,n}} \pi_{2\pmb{j}} | \leq
 n^{-1/2} \log^{\gamma}(n) \sum_{k=1}^{K_n} 2^{k(\frac{d}{2}-v)}.
\end{equation}
For the case $d < 2v$, $\sum_{k=1}^{K_n} 2^{k(\frac{d}{2}-v)} \leq  \frac{2^{(\frac{d}{2}-v)}}{1-2^{(\frac{d}{2}-v)}}$. \\
For the case $d = 2v$, $\sum_{k=1}^{K_n} 2^{k (\frac{d}{2}-v)} = K_n \leq \frac{2}{d} \log_2(n) = \frac{2}{d} \frac{1}{\log(2)} \log(n)$.
For the case $d > 2v$, by Equation \ref{convenientGeoSeries}
\[\sum_{k=1}^{K_n} 2^{k(\frac{d}{2}-v)} \leq \frac{2^{(\frac{d}{2}-v)}}{2^{(\frac{d}{2}-v)}-1} (2^{K_n(\frac{d}{2} -v)}-1) \leq \frac{2^{2(\frac{d}{2}-v)}}{2^{(\frac{d}{2}-v)}-1} n^{\frac{1}{d} (\frac{d}{2}-v)}.\] Using this and that $-\frac{1}{2} + \frac{1}{d}(\frac{d}{2} - v) = -\frac{d}{v}$ completes the case.
\end{proof}
Now we are ready to prove contraction around the posterior mean histogram $\bar{P}_n$.

\contractionAroundCentralEstimatorDwasser*
\begin{proof}
For $n \in \mathbb{N}$, $\pmb{\pi}_1,\pmb{\pi}_2 \in \mathcal{S}^{b_n d-1}$, setting $K = K_n$ in Equation \ref{multires}, we have that
\begin{equation}
W_v^v(\psi_{b_n}(\pmb{\pi}_1),\psi_{b_n}(\pmb{\pi}_2)) \leq d^{v/p} \left[\left(\frac{1}{2}\right)^{K_n} + \sum_{k=1}^{K_n} \left(\frac{1}{2}\right)^{(k-1)v} \sum_{S \in \mathcal{S}_k} | \psi_{b_n}(\pmb{\pi}_1)(S) - \psi_{b_n}(\pmb{\pi}_2)(S)|\right].
\end{equation}
Now using Lemma \ref{nestedPartitionDetails} part 3, we further simplify the above upper bound to
{\allowdisplaybreaks
\begin{equation}
\begin{aligned}
\label{temporaryUpper}
W_v^v(\psi_{b_n}(\pmb{\pi}_1),\psi_{b_n}(\pmb{\pi}_2))  \\ & \leq
d^{v/p} \left[\left(\frac{1}{2}\right)^{K_nv} + \sum_{k=1}^{K_n} \left(\frac{1}{2}\right)^{(k-1)v} \sum_{S \in \mathcal{S}_k} \left|\psi_{b_n}(\pmb{\pi}_1) \left(\bigcup_{\pmb{j} \in I_{S,k,n} } A_{\pmb{j},b_n}\right) - \right. \right.\\
& \left. \left. \phantom{aaaa} \psi_{b_n}(\pmb{\pi}_2) \left(\bigcup_{\pmb{j} \in I_{S,k,n}} A_{\pmb{j},b_n} \right) \right| \right] \\ & \leq
 d^{v/p} \left[\left(\frac{1}{2}\right)^{K_nv} + \sum_{k=1}^{K_n} \left(\frac{1}{2}\right)^{(k-1)v} \sum_{S \in \mathcal{S}_k} \left| \sum_{\pmb{j} \in I_{S,k,n}} \psi_{b_n}(\pmb{\pi}_1)(A_{\pmb{j},b_n}) - \right. \right. \\
& \left. \left. \phantom{aaaa} \sum_{\pmb{j} \in I_{S,k,n}} \psi_{b_n}(\pmb{\pi}_2)(A_{\pmb{j},b_n}) \right| \right] \\ & \leq
 d^{v/p} \left[\left(\frac{1}{2}\right)^{K_nv} + \sum_{k=1}^{K_n} \left(\frac{1}{2}\right)^{(k-1)v} \sum_{S \in \mathcal{S}_k} \left| \sum_{\pmb{j} \in I_{S,k,n}} \pi_{1\pmb{j}} - \sum_{\pmb{j} \in I_{S,k,n}} \pi_{2\pmb{j}} \right| \right], 
\end{aligned}
\end{equation}
}
where to get the first inequality of Equation \ref{temporaryUpper} we use lemma \ref{nestedPartitionDetails} part 3. For the second inequality we use that $\{A_{\pmb{j},b_n} \}_{\pmb{j} \in [b_n]^d}$ partitions $[0,1)^d$ and in particular the $A_{\pmb{j},n}$ sets are disjoint. To get the third inequality, we use that by definition of $\psi_{b_n}$, for every $\pmb{\pi} \in \mathcal{S}^{b_n d-1}$ and every $\pmb{j} \in [b_n]^d$, $\psi_{b_n}(\pmb{\pi})(A_{\pmb{j},b_n}) = \pi_{\pmb{j}}$.

Now using Equation \ref{temporaryUpper}, the preimage form of $\Pi_n$ (see Equation \ref{preImageForm}), the definition of $\bar{P}_n$ (see Equation \ref{posteriorMeanHistogram}), and the definition of $z_n^*$ (the posterior measure over the simplex $\mathcal{S}^{b_n d-1}$), we have that for any $1 \leq v < \infty , d \in \mathbb{N}$, almost surely under $P_0$ and eventually in $n$,
{\allowdisplaybreaks
\begin{equation}
\begin{aligned}
\label{settingUpTheUnionBound}
& \Pi_n(P \in \mathcal{P}_d: W_v(P,\bar{P}_n) \geq \tau_n(d,v)) \\  
& = z_n^*(\pmb{\pi}_1 \in \mathcal{S}^{b_n d-1}: W_v^v(\psi_{b_n}(\pmb{\pi}_1),\psi_{b_n}(\mathbb{E}_{z_n^*}(\pmb{\pi} | Y_1,\dots,Y_n) ) ) \geq \tau_n^{v}(d,v) ) \\
& \leq  z_n^* \left(\pmb{\pi}_1 \in \mathcal{S}^{b_n d-1}: \frac{1}{2^{K_n v}} + \sum_{k=1}^{K_n} \frac{1}{2^{(k-1)v}} \sum_{S \in \mathcal{S}_k} \left| \sum_{\pmb{j} \in I_{S,k,n}} \pi _{1\pmb{j}} - \right. \right. \\
& \hspace{5cm} \left. \left. \sum_{\pmb{j} \in I_{S,k,n}} \mathbb{E}_{z_n^*} (\pi_{\pmb{j}} | Y_1,\dots,Y_n) \right| \geq d^{-v/p}\tau_n^{v}(d,v) \right) \\
& \leq z_n^*\left(\pmb{\pi}_1 \in \mathcal{S}^{b_n d-1}:  \sum_{k=1}^{K_n} \frac{1}{2^{(k-1)v}} \sum_{S \in \mathcal{S}_k} \left| \sum_{\pmb{j} \in I_{S,k,n}} \pi_{1\pmb{j}} - \sum_{\pmb{j} \in I_{S,k,n}} \mathbb{E}_{z_n^*} (\pi_{\pmb{j}} | Y_1,\dots,Y_n) \right| \right. \\
& \hspace{5cm} \left. \geq \frac{1}{2} d^{-v/p}\tau_n^{v}(d,v) \right) \\
& \leq z_n^*\left(\pmb{\pi}_1 \in \mathcal{S}^{b_n d-1}:  \sum_{k=1}^{K_n} \frac{1}{2^{kv}} \sum_{S \in \mathcal{S}_k} \left| \sum_{\pmb{j} \in I_{S,k,n}} \pi_{1\pmb{j}} - \sum_{\pmb{j} \in I_{S,k,n}} \mathbb{E}_{z_n^*} (\pi_{\pmb{j}} | Y_1,\dots,Y_n) \right| \right. \\
& \hspace{5cm}  \geq 2^{v-1} d^{-v/p}\tau_n^{v}(d,v) \Bigg),
\end{aligned}
\end{equation} }
where the second last inequality is eventually in $n$, using that in all three cases ($(d <2v, k_n = n^{1/2v}), (d = 2v, k_n = n^{1/2v}), (d > 2v, k_n = n^{1/d}$), $(1/2)^{K_n v}  \leq \frac{1}{2} d^{-v/p} \tau_n^v(d,v)$ eventually in $n$. This is because in each case $(1/2)^{K_n v} \leq k_n^{-v}$ and $\tau_n^{v}(d,v) \gtrsim k_n^{-v} \log^{\omega}(n)$ for some $\omega > 0$ where the value of $\omega$ depends on the case.

Now note that for $d < 2v$ since $C_4(d,v) > \frac{1}{2}C_1(d,v)^{1/v} 2^{1/v} d^{1/p} $, we have that
\begin{equation}
\label{justifyingClosenessCase1}
C_1(d,v) n^{-1/2} \log^{\gamma}(n) < 2^{v-1} d^{-v/p} \tau_n^{v}(d,v),
\end{equation}
and note that for $d = 2v$, since $C_5(d,v) > \frac{1}{2} C_2(d,v)^{1/v} 2^{1/v} d^{1/p}$, we have that
\begin{equation}
C_2(d,v) n^{-1/2} \log^{1+\gamma}(n)  < 2^{v-1} d^{-v/p} \tau_n^{v}(d,v),
\label{justifyingClosenessCase2}
\end{equation}
and note that for $d > 2v$, since $C_6(d,v) > \frac{1}{2} C_3(d,v)^{1/v} 2^{1/v} d^{1/p}$, we have that
\begin{equation}
\label{justifyingClosenessCase3}
C_3(d,v) n^{-\frac{v}{d}} \log^{\gamma}(n) < 2^{v-1} d^{-v/p} \tau_n^{v}(d,v).
\end{equation}
So applying Lemma \ref{closeInParamHelper} with the value $\gamma$ using Equation \ref{justifyingClosenessCase1} when $d < 2v$, Equation \ref{justifyingClosenessCase2} when $d = 2v$ and Equation \ref{justifyingClosenessCase3} we have that for all $v \geq 1$ and $d \in \mathbb{N}$,
{\allowdisplaybreaks
\begin{align}
\label{connectingToUnionBound}
& z_n^* \left(\pmb{\pi}_1 \in \mathcal{S}^{b_n d-1}:  \sum_{k=1}^{K_n} \frac{1}{2^{kv}} \sum_{S \in \mathcal{S}_k} \left| \sum_{\pmb{j} \in I_{S,k,n}} \pi_{1\pmb{j}} - \sum_{\pmb{j} \in I_{S,k,n}} \mathbb{E}_{z_n^*} (\pi_{\pmb{j}} | Y_1,\dots,Y_n) \right| \geq 2^{v-1} d^{-v/p}\tau_n^{v}(d,v) \right) \nonumber \\
& \leq z_n^* \left(\pmb{\pi}_1 \in \mathcal{S}^{b_n d-1}: \exists k \in \{1,2,\dots,K_n\} \text{ s.t }  \sum_{S \in \mathcal{S}_k} \left| \sum_{\pmb{j} \in I_{S,k,n}} \pi_{1\pmb{j}} - \sum_{\pmb{j} \in I_{S,k,n}} \mathbb{E}_{z_n^*} (\pi_{\pmb{j}} | Y_1,\dots,Y_n) \right| \right. \nonumber \\
& \hspace{8cm} \left. > \log^{\gamma}(n) \sqrt{ \frac{2^{dk} }{n + \sum_{\pmb{j} \in [b_n]^d} \alpha_{\pmb{j},b_n} } } \right) \nonumber \\ 
& \leq \sum_{k=1}^{K_n} z_n^*\left(\pmb{\pi}_1 \in \mathcal{S}^{b_n d-1}: \sum_{S \in \mathcal{S}_k} \left| \sum_{\pmb{j} \in I_{S,k,n}} \pi_{1\pmb{j}} - \sum_{\pmb{j} \in I_{S,k,n}} \mathbb{E}_{z_n^*} (\pi_{\pmb{j}} | Y_1,\dots,Y_n) \right| \right. \nonumber \\ 
& \hspace{8cm} \left. >  \log^{\gamma}(n) \sqrt{ \frac{2^{dk} }{n + \sum_{\pmb{j} \in [b_n]^d} \alpha_{\pmb{j},b_n} } } \right),
\end{align}
}
where in the last line we have used the union bound.

Now note that for $v \geq 1, d \in \mathbb{N}$, by Lemma \ref{nestedPartitionDetails} part 3, $\{I_{S,k,n}\}_{S \in \mathcal{S}_k}$ partitions $[b_n]^d$ for $k \in \{1,2,\dots,K_n\}$. In particular, since  $z_n^* = \Dir(\cdot| \{\alpha_{\pmb{j},b_n}^{*}\}_{\pmb{j} \in [b_n]^d})$, under $z_n^*$, $\{\sum_{\pmb{j} \in I_{S,k,n}} \pi_{\pmb{j}} \}_{S \in \mathcal{S}_k} \sim \Dir( \{\sum_{\pmb{j} \in I_{S,k,n}} \alpha_{\pmb{j},b_n}^*\}_{S \in \mathcal{S}_k})$. Moreover, $\sum_{S \in \mathcal{S}_k} \sum_{\pmb{j} \in I_{S,k,n}} \alpha_{\pmb{j},b_n}^{*} = \sum_{\pmb{j} \in [b_n]^d} \alpha_{\pmb{j},b_n}^{*} \overset{a.s}{=} n + \sum_{\pmb{j} \in [b_n]^d} \alpha_{\pmb{j},b_n}$. Finally note that by definition of $\mathcal{S}_k$, $|\mathcal{S}_k| = 2^{dk}$. So for $n \in \mathbb{N}$ and $k \in \{1,2,\dots,K_n\}$ applying Dirichlet concentration of measure Equation \ref{diricheletConcentration} with $\delta := \log^{-\gamma}(n)$ , we have that
\begin{align}
\label{applyingDiricheletConcentration}
& z_n^*\left(\pmb{\pi}_1 \in \mathcal{S}^{b_n d-1}: \sum_{S \in \mathcal{S}_k} \left| \sum_{\pmb{j} \in I_{S,k,n}} \pi_{1\pmb{j}} - \sum_{\pmb{j} \in I_{S,k,n}} \mathbb{E}_{z_n^*} (\pi_{\pmb{j}} | Y_1,\dots,Y_n) \right| > \log^{\gamma}(n) \sqrt{ \frac{2^{dk} }{n + \sum_{\pmb{j} \in [b_n]^d} \alpha_{\pmb{j},b_n} } } \right) \nonumber \\ 
& \leq \log^{-\gamma}(n).
\end{align}
By Equations \ref{connectingToUnionBound} and \ref{applyingDiricheletConcentration}, we have that for $v \geq 1$, $d \in \mathbb{N}$,$n \in \mathbb{N}$ and almost surely under $P_0$
\begin{align}
\label{penultimate}
& z_n^*\left(\pmb{\pi}_1 \in \mathcal{S}^{b_n d-1}:  \sum_{k=1}^{K_n} (1/2)^{kv} \sum_{S \in \mathcal{S}_k} \left| \sum_{\pmb{j} \in I_{S,k,n}} \pi_{1\pmb{j}} - \sum_{\pmb{j} \in I_{S,k,n}} \mathbb{E}_{z_n^*} (\pi_{\pmb{j}} | Y_1,\dots,Y_n)\right| \geq 2^{v-1} d^{-v/p}\tau_n^{v}(d,v) \right) \nonumber \\
& \leq \sum_{k=1}^{K_n} \log^{-\gamma}(n) \nonumber \\
& = K_n \log^{-\gamma}(n).
\end{align}
Since $\gamma > 1$ and $K_n \log^{-\gamma}(n) \lesssim \log^{1-\gamma}(n)$ for every $v \geq 1, d \in \mathbb{N}$, we conclude that
\begin{align}
& z_n^*\left(\pmb{\pi}_1 \in \mathcal{S}^{b_n d-1}:  \sum_{k=1}^{K_n} (1/2)^{kv} \sum_{S \in \mathcal{S}_k} \left| \sum_{\pmb{j} \in I_{S,k,n}} \pi_{1\pmb{j}} - \sum_{\pmb{j} \in I_{S,k,n}} \mathbb{E}_{z_n^*} (\pi_{\pmb{j}} | Y_1,\dots,Y_n)\right| \geq 2^{v-1} d^{-v/p}\tau_n^{v}(d,v) \right) \nonumber \\
& \to 0
\end{align}
as $n \to \infty$ almost surely under $P_0$.

Using this and Equation \ref{settingUpTheUnionBound} we conclude that for every $v \geq 1, d \in \mathbb{N}$, almost surely under $P_0$
\begin{equation}
\label{ultimate}
\Pi_n(P \in \mathcal{P}_d: W_v(P,\bar{P}_n) \geq \tau_n(d,v)) \to 0 \text{ as } n \to \infty.
\end{equation}
By dominated convergence the conclusion of the theorem follows.
\end{proof}

\subsubsection{Theorem \ref{concludingTheoremDwasser}}

We are now finally ready to state and easily prove posterior contraction rates using Theorems \ref{frequentistDwasser} and \ref{contractionAroundCentralEstimatorDwasser}.

\concludingTheoremDwasser*
\begin{proof}
By the triangle inequality and the union bound
\begin{align}
\label{dWasserGeneralStrategy}
\mathbb{E}_{p_0} \left[ \Pi_n \left(P \in \mathcal{P}_d : W_v(P_0,P) \geq \epsilon_n(d,v) \right) \right] & \leq
\mathbb{E}_{p_0} \left[\Pi_n \left(P \in \mathcal{P}_d: W_v(P_0,\bar{P}_n) \geq \frac{\epsilon_n(d,v)}{2}\right)\right] \nonumber \\
&  \phantom{aaaaa} +\mathbb{E}_{p_0} \left[\Pi_n \left(P \in \mathcal{P}_d: W_v(P,\bar{P}_n) \geq \frac{\epsilon_n(d,v)}{2} \right) \right] \nonumber \\
& = P_0 \left[W_v(P_0,\bar{P}_n) \geq \frac{\epsilon_n(d,v)}{2} \right] \nonumber \\ 
& \phantom{aaaaa} + \mathbb{E}_{p_0} \left[\Pi_n\left(P \in \mathcal{P}_d: W_v(P,\bar{P}_n) \geq \frac{\epsilon_n(d,v)}{2}\right)\right],
\end{align}
\sloppy where the equality in the last line is because $\Pi_n\left(P \in \mathcal{P}_d: W_v(P_0,\bar{P}_n) \geq \frac{\epsilon_n(d,v)}{2}\right) = \mathbb{I}\left(W_v(P_0,\bar{P}_n) \geq \frac{\epsilon_n(d,v)}{2}\right)$. Using Markov's inequality and Theorem \ref{frequentistDwasser}
\begin{equation}
\label{usingFrequentistRate}
P_0\left[W_v(P_0,\bar{P}_n) \geq \frac{\epsilon_n(d,v)}{2} \right] \leq 2 \frac{\mathbb{E}_{p_0} W_v(P_0,\bar{P}_n)}{\epsilon_n(d,v)} \lesssim
\begin{rcases}
	\begin{dcases}
\frac{n^{-\frac{1}{2v}}}{n^{-\frac{1}{2v}}  \log^{\frac{\gamma}{v} }(n) }& d < 2v \\
\frac{n^{-\frac{1}{2v}} \log^{\frac{1}{v}} (n)}{n^{-\frac{1}{2v}} \log^{\frac{1+\gamma}{v}} (n)}& d = 2v \\
\frac{n^{-\frac{1}{d}}}{n^{-\frac{1}{d}} \log^{\frac{\gamma}{v}}(n) }& d > 2v
	\end{dcases}
\end{rcases} \to 0 \text{ as } n \to \infty.
\end{equation}
Since $C_7(d,v) \geq 2 C_4(d,v)$, $C_8(d,v) \geq 2 C_5(d,v)$, $C_9(d,v) \geq 2 C_6(d,v)$, we have that $\tau_n(d,v) \leq \frac{\epsilon_n(d,v)}{2}$ for every $v \geq 1, d \in \mathbb{N}$ where $\tau_{n}(d,v)$ is as defined in Theorem \ref{contractionAroundCentralEstimatorDwasser}. Using this and Theorem \ref{contractionAroundCentralEstimatorDwasser}, we have that for every $v \geq 1, d \in \mathbb{N}$,
\begin{equation}
\label{usingContractionRate}
\mathbb{E}_{p_0} \left[\Pi_n\left(P \in \mathcal{P}_d: W_v(P,\bar{P}_n) \geq \frac{\epsilon_n(d,v)}{2}\right)\right] \leq \mathbb{E}_{p_0} \left[\Pi_n\left(P \in \mathcal{P}_d: W_v(P,\bar{P}_n) \geq \tau_n(d,v)\right)\right] \to 0
\end{equation}
as $n \to \infty$. By Equations \ref{dWasserGeneralStrategy}, \ref{usingFrequentistRate}, and \ref{usingContractionRate}, we conclude that for all $d \in \mathbb{N}, v \geq 1$,
\begin{equation}
\mathbb{E}_{p_0} \left[\Pi_n\left(P \in \mathcal{P}_d : W_v(P_0,P) \geq \epsilon_n(d,v) \right) \right] \to 0
\end{equation}
as $n \to \infty$. By Markov the theorem statement follows.
\end{proof}

\section{Connection between the dyadic Bayes histogram and the \cite{niles2022minimax} histogram}
\label{app:nwbConnect}

In this section we rigorously show that the histogram presented in \citet[Theorem~6]{niles2022minimax}, which we denote $\ddot{P}_n$, is also a dyadic histogram, but with the prior concentrations all $0$. Furthermore, we observe here that the restriction on the resolution parameter used in the proof of \citet[Theorem 6]{niles2022minimax} implies that $\ddot{P}_n$ has $\gtrsim n^{d/(d+s)}$ bins where $s$ is the assumed holder regularity of the density to be estimated.

To begin studying $\ddot{P}_n$, we first recall the form of the $d$ dimensional Haar wavelet basis on $[0,1)^d$. Specifically, the father wavelet of the Haar system, denoted $\phi_{F}$, is defined as the constant function on $[0,1)$. The mother wavelet, denoted $\psi_{M}$, is defined as 
\begin{equation}
    \label{motherWavelet}
    \psi_M(x) := \begin{cases} 1 & 0 \leq x < 1/2. \\
                              -1 & 1/2 \leq x < 1. \\
                              0 & o.w. \\
                \end{cases}
\end{equation} 
For $u \in \{0,1,2,\dots\}$, the Haar wavelet basis on the space $[0,1)^d$ at resolution $u$ consists of the functions on $[0,1)^d$
\begin{equation}
    \Gamma_{G,m}^{u}(x) := 2^{\frac{dj}{2}} \prod_{l=1}^{d} \psi_{G_l}(2^u x_l - m_l), \; G \in \{F,M\}^{d*}, \; m \in \{0,1,2,\dots,2^u-1\}^{d},
\end{equation}

where $\{F,M\}^{d*}$ is the collection of $d$ dimensional ordered sets consisting only of the objects $F$ and $M$ \text{and} excluding the object that consists only of $F$ repeated $d$ times. \cite{niles2022minimax} use the notation $\Psi_{u}$ to refer to this collection of functions. Specifically
\begin{equation}
    \Psi_{u} := \{ \Gamma^{u}_{G,m} : G \in \{F,M\}^{d*}, m \in \{0,1,2,\dots,2^u-1\}^{d} \}.
\end{equation}

For a given sample size $n$, a collection of estimators indexed by a resolution parameter $J \in \{0,1,2,\dots,\}$ are introduced in Theorem 6 of \cite{niles2022minimax}. The measure of these estimators we denote by $\ddot{P}_{n,J}$; the density is denoted by $\hat{f}_{n,J}$. $\hat{f}_{n,J}$ is defined as
\begin{equation}
    \label{nwbHistDef}
    \hat{f}_{n,J} := 1+\sum_{0 \leq u < J} \sum_{\Gamma \in \Psi_u} \hat{\beta}_{\Gamma,n} \Gamma,
\end{equation}
where for $\Gamma \in \Psi_u$
\begin{equation}
    \label{nwbBetaDef}
\hat{\beta}_{\Gamma,n} := \frac{1}{n} \sum_{i=1}^{n} \Gamma(Y_i).
\end{equation}

The estimator analyzed in \citet[Theorem~6]{niles2022minimax} is $\hat{f}_{n, J = J(n)}$ where $J(n)$ is a function of $n$ that must satisfy $n \asymp 2^{J(n)(d+s)}$ where $s$ is the $0 < s < 1$ holder smoothness of the density to be estimated. We denote $\hat{f}_{n, J = J(n)}$ by $\ddot{p}_n$ (and its corresponding probability measure by $\ddot{P}_n$).

We now prove that $\ddot{P}_n$ is a dyadic histogram with zero prior concentration and clearly establish the relation between $J(n)$ and the number of bins in this histogram. We will do this in the 2 dimensional case only; the notation and geometry is more cumbersome in arbitrarily high dimension $d$, but the main proof techniques are illustrated by the $d=2$ case. For this purpose define the two dimensional dyadic histogram classes of densities $\{\mathcal{H}_{\ell} \}_{\ell \in \{0,1,2,\dots\}}$ where
\begin{equation}
    \label{base2HistoClasses}
    \mathcal{H}_{\ell} := \{ p_{\pmb{\pi}} = \HistDens(\cdot|\pmb{\pi},2^{\ell}), \pmb{\pi} \in \mathcal{S}^{2^{2 \ell}-1} \}
\end{equation}
where recall
\begin{equation}
    \label{base2HistoDefinition}
p_{\pmb{\pi}}(x) :=  \sum_{(i_1,i_2) \in [2^{\ell}]^2} 2^{2 \ell} \pi_{(i_1,i_2)}\mathbb{I}(x \in A_{(i_1,i_2),2^{\ell}}),
\end{equation}
To help make notation concise in this section, for $\ell \in \{0,1,2,\dots\}$ and $(t_1,t_2) \in [2^{\ell}-1]^2$, define $B^{\ell}_{(t_1,t_2)} := [t_1 2^{-\ell}, (t_1+1) 2^{-\ell}) \times [t_2 2^{-\ell},(t_2+1) 2^{-\ell})$. Thus $p_{\pmb{\pi}}$ can be rewritten as
\begin{equation}
    p_{\pmb{\pi}}(x) =  \sum_{(t_1,t_2) \in \{0,1,2,\dots,2^{\ell}-1\}^2 } 2^{2 \ell} \pi_{(t_1+1,t_2+1)}\mathbb{I}(x \in B^{\ell}_{(t_1,t_2)}).
\end{equation}
We now establish that $\ddot{p}_n \in \mathcal{H}_{J(n)}$.
\begin{lemma}
\label{nwbLemma}
Let $n \in \mathbb{N}$. For $J \in \{0,1,2,\dots,\}$, $\hat{f}_{n,J} \in \mathcal{H}_{J}$ with parameter $\pmb{\pi}_J$ and for $(t_1,t_2) \in \{0,1,2,\dots,2^J-1\}^2$, $\pi_{J,(t_1+1,t_2+1)} := \frac{1}{n} \sum_{i=1}^{n} \mathbb{I}(Y_i \in B^{J}_{(t_1,t_2)})$. In particular, $\ddot{p}_n \in \mathcal{H}_{J(n)}$ with parameter $\pmb{\pi}_{J(n)}$.
\end{lemma}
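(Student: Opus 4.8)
The plan is to exploit the multiresolution (telescoping) structure of the Haar basis, which will show that the truncated expansion in Equation \ref{nwbHistDef} is nothing but the orthogonal projection of the empirical measure onto the span of the scale-$J$ father wavelets, i.e.\ onto the piecewise-constant functions on the cells $B^{J}_{(t_1,t_2)}$. Fix $n$ and $J$. Since $\hat\beta_{\Gamma,n}=\frac1n\sum_{i=1}^n\Gamma(Y_i)$ is linear in the data, Equation \ref{nwbHistDef} rewrites as
\[
\hat f_{n,J}(x)=\frac1n\sum_{i=1}^n K_J(x,Y_i),\qquad
K_J(x,y):=1+\sum_{0\le u<J}\sum_{\Gamma\in\Psi_u}\Gamma(x)\,\Gamma(y),
\]
so the lemma reduces to identifying the kernel $K_J$. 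I would prove that, for $x,y\in[0,1)^2$, $K_J(x,y)=2^{2J}\,\mathbb{I}(x,y\ \text{lie in a common cell}\ B^{J}_{(t_1,t_2)})$. Granting this, substituting back gives $\hat f_{n,J}(x)=2^{2J}\sum_{(t_1,t_2)\in\{0,\dots,2^J-1\}^2}\mathbb{I}(x\in B^{J}_{(t_1,t_2)})\bigl(\frac1n\sum_{i=1}^n\mathbb{I}(Y_i\in B^{J}_{(t_1,t_2)})\bigr)$, which is precisely $p_{\pmb{\pi}_J}$ with $\pi_{J,(t_1+1,t_2+1)}=\frac1n\sum_{i=1}^n\mathbb{I}(Y_i\in B^{J}_{(t_1,t_2)})$; since $\{B^{J}_{(t_1,t_2)}\}$ partitions $[0,1)^2$ these weights are nonnegative and sum to one, hence $\pmb{\pi}_J\in\mathcal S^{2^{2J}-1}$ and $\hat f_{n,J}\in\mathcal H_J$. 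Taking $J=J(n)$ then yields $\ddot p_n\in\mathcal H_{J(n)}$.

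For the kernel identity I would induct on $J$ via a one-step refinement. Put $\psi_F:=\phi_F=\mathbb{I}_{[0,1)}$, so that every Haar basis function at resolution $u$, translate $m$, has the tensor form $2^{u}\prod_{l=1}^{2}\psi_{G_l}(2^ux_l-m_l)$ with $G\in\{F,M\}^2$, the all-$F$ choice recovering the father wavelet $\phi_{u,m}$. The elementary one-dimensional fact is that $\psi_F(a)\psi_F(b)+\psi_M(a)\psi_M(b)$ equals $2$ when $a,b$ fall in the same half of $[0,1)$, and equals $0$ otherwise (the ``otherwise'' including $a\notin[0,1)$ or $b\notin[0,1)$, since both $\psi_F$ and $\psi_M$ vanish off $[0,1)$). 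Taking the product of this over the two coordinates, with $a=2^ux_l-m_l$ and $b=2^uy_l-m_l$, and scaling by $2^{2u}$, gives $\sum_{G\in\{F,M\}^2}\Gamma^u_{G,m}(x)\Gamma^u_{G,m}(y)=2^{2(u+1)}\mathbb{I}(x,y\ \text{lie in a common}\ B^{u+1}\ \text{cell contained in}\ B^u_m)$. Subtracting the all-$F$ term $\phi_{u,m}(x)\phi_{u,m}(y)=2^{2u}\mathbb{I}(x\in B^u_m)\mathbb{I}(y\in B^u_m)$ and summing over $m$ produces the telescoping relation
\[
\sum_{\Gamma\in\Psi_u}\Gamma(x)\,\Gamma(y)=2^{2(u+1)}\delta_{u+1}(x,y)-2^{2u}\delta_u(x,y),\qquad \delta_\ell(x,y):=\mathbb{I}\bigl(x,y\ \text{in a common}\ B^{\ell}\ \text{cell}\bigr).
\]
Summing over $0\le u<J$ and using $\delta_0\equiv 1$ on $[0,1)^2$ collapses the series, leaving $K_J(x,y)=1+2^{2J}\delta_J(x,y)-\delta_0(x,y)=2^{2J}\delta_J(x,y)$, as claimed.

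I expect the kernel identity to be the only real obstacle: it is the multiresolution bookkeeping that requires care, in particular keeping straight how the index set $\{F,M\}^{d*}$ (which deletes the all-$F$ string) meshes with the father wavelet that is excluded from $\Psi_u$ but is present in $K_J$ through the constant $1$. The $d=2$ case already displays all the relevant geometry, and the general-$d$ argument is verbatim the same with $2^{2u}$ replaced by $2^{du}$ and the coordinate product of $2$'s giving $2^d$. The remaining steps — the linearity rewrite, reading off the histogram weights, and checking that they form a probability vector — are routine.
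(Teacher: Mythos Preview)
Your proof is correct and takes a genuinely different route from the paper. The paper argues by induction on $J$ directly on the density $\hat f_{n,J}$: given that $\hat f_{n,J}$ is the empirical histogram on the level-$J$ cells, it fixes a cell $B^J_{(t_1,t_2)}$, writes down explicitly the three nonzero wavelets $\Gamma^J_{\{M,M\},(t_1,t_2)},\Gamma^J_{\{M,F\},(t_1,t_2)},\Gamma^J_{\{F,M\},(t_1,t_2)}$ on that cell together with their empirical coefficients, and then verifies by a four-case computation that $\hat f_{n,J}+\sum_{\Gamma\in\Psi_J}\hat\beta_{\Gamma,n}\Gamma$ takes the value $2^{2(J+1)}n^{-1}\sum_i\mathbb{I}(Y_i\in B^{J+1}_{(z_1,z_2)})$ on each of the four children $B^{J+1}_{(z_1,z_2)}$. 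Your argument instead isolates the reproducing kernel $K_J(x,y)=1+\sum_{u<J}\sum_{\Gamma\in\Psi_u}\Gamma(x)\Gamma(y)$, proves the one-level identity $\sum_{\Gamma\in\Psi_u}\Gamma(x)\Gamma(y)=2^{2(u+1)}\delta_{u+1}-2^{2u}\delta_u$ via the elementary fact $\psi_F(a)\psi_F(b)+\psi_M(a)\psi_M(b)=2\,\mathbb{I}(a,b\text{ in same half of }[0,1))$, and telescopes. This is the standard projection-kernel viewpoint from wavelet theory; it is cleaner, avoids the explicit four-case bookkeeping, and, as you note, extends verbatim to general $d$ by replacing $2^{2u}$ with $2^{du}$, whereas the paper restricts to $d=2$ precisely because its case analysis becomes unwieldy in higher dimension. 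The paper's approach, on the other hand, is more self-contained for a reader unfamiliar with multiresolution kernels and makes the inductive refinement completely explicit.
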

\begin{proof}
    Fix $n \in \mathbb{N}$. The proof is by induction on the quantity $J$. For the base case, suppose $J = 0 = \ceil{\log_2(k_n)}$. Then $\pi_{0,(1,1)} =1$ and $\hat{f}_{n,0} =1 = 2^{0} \pi_{0,(1,1)}$ on $[0,1)^2$. So $\hat{f}_{n,0} \in \mathcal{H}_{0}$ with parameter $\pmb{\pi}_{0}$.

    For the inductive step, suppose that for some $J \in \{0,1,2,\dots,\}$, $\hat{f}_{n,J} \in \mathcal{H}_{J}$ with parameter $\pmb{\pi}_J$ . Note that 
    \begin{equation}
        \label{nwbInductiveConnect}
        \hat{f}_{n,J+1} = \hat{f}_{n,J} + \sum_{\Gamma \in \Psi_J} \hat{\beta}_{\Gamma,n} \Gamma,
    \end{equation}
    and consider the partition of $\{0,1,\dots,2^{J+1}-1\}^2$ that is 
    \[\{S_{(t_1,t_2)}, (t_1,t_2) \in \{0,1,2,\dots,2^{J}-1\}^2 \} \] where
    \begin{equation}
        \label{nwbIndexSets}
        S_{(t_1,t_2)} := \{ (2t_1,2t_2),(2t_1+1,2t_2),(2t_1,2t_2+1),(2t_1+1,2t_2+1) \}.
    \end{equation}
    For $(t_1,t_2) \in \{0,1,2,\dots,2^{J}-1\}^2$, $B_{(t_1,t_2)}^{J} = \bigcup_{(z_1,z_2) \in S_{(t_1,t_2)} } B_{(z_1,z_2)}^{J+1}$. We will now show that on each of the $4$ sets at the $(J+1)^{th}$ resolution corresponding to the indices $(z_1,z_2) \in S_{(t_1,t_2)}$, $\hat{f}_{n,J+1} = 2^{2(J+1)} \pi_{J+1,(z_1+1,z_2+1)}$. First note that there are only $3$ functions in $\Psi_J$ that are non-zero on $B_{(t_1,t_2)}^{J}$. These are $\Gamma^{J}_{\{M,M\},(t_1,t_2)},\Gamma^{J}_{\{M,F\},(t_1,t_2)},\Gamma^{J}_{\{F,M\},(t_1,t_2) }$ and 
    \begin{equation}
        \label{nwbMM}
        \Gamma^{J}_{\{M,M\},(t_1,t_2)} = 2^{\frac{2 J}{2}}\begin{cases}
         1   & x \in B^{J+1}_{(2t_1,2t_2)} \\
         -1   & x \in B^{J+1}_{(2t_1+1,2t_2)} \\
         -1   & x \in B^{J+1}_{(2t_1,2t_2+1)} \\
         1  & x \in B^{J+1}_{(2t_1+1,2t_2+1)}
        \end{cases}
    \end{equation}
    and
    \begin{equation}
        \label{nwbMF}
        \Gamma^{J}_{\{M,F\},(t_1,t_2)} = 2^{\frac{2 J}{2}}\begin{cases}
         1   & x \in B^{J+1}_{(2t_1,2t_2)} \\
         -1   & x \in B^{J+1}_{(2t_1+1,2t_2)} \\
         1   & x \in B^{J+1}_{(2t_1,2t_2+1)} \\
         -1  & x \in B^{J+1}_{(2t_1+1,2t_2+1)}
        \end{cases}
    \end{equation}
    and
    \begin{equation}
        \label{nwbFM}
        \Gamma^{J}_{\{F,M\},(t_1,t_2)} = 2^{\frac{2 J}{2}}\begin{cases}
         1   & x \in B^{J+1}_{(2t_1,2t_2)} \\
         1   & x \in B^{J+1}_{(2t_1+1,2t_2)} \\
         -1   & x \in B^{J+1}_{(2t_1,2t_2+1)} \\
         -1  & x \in B^{J+1}_{(2t_1+1,2t_2+1)}
        \end{cases}
    \end{equation}
    Using Equations \ref{nwbMM},\ref{nwbMF}, and \ref{nwbFM} and that these three functions are the only ones that are non-zero in $\Psi_J$ for $x \in B_{(t_1,t_2)}^{J}$, we have that
    \[\sum_{\Gamma \in \Psi_J} \hat{\beta}_{\Gamma,n} \Gamma(x) = \]
    \begin{equation}
        \label{nwbWaveletPart}   \frac{2^{2J}}{n} \begin{cases}
         3 \sum_{i=1}^{n} \mathbb{I}(Y_i \in B^{J+1}_{(2t_1,2t_2)})- \sum_{i=1}^{n} \mathbb{I}(Y_i \in B^{J+1}_{(2t_1+1,2t_2)})  & x \in B^{J+1}_{(2t_1,2t_2)} \\
         \qquad -\sum_{i=1}^{n} \mathbb{I}(Y_i \in B^{J+1}_{(2t_1,2t_2+1)}) - \sum_{i=1}^{n} \mathbb{I}(Y_i \in B^{J+1}_{(2t_1+1,2t_2+1)}) & \\
         - \sum_{i=1}^{n} \mathbb{I}(Y_i \in B^{J+1}_{(2t_1,2t_2)})+3 \sum_{i=1}^{n} \mathbb{I}(Y_i \in B^{J+1}_{(2t_1+1,2t_2)})   & x \in B^{J+1}_{(2t_1+1,2t_2)} \\
         \qquad -\sum_{i=1}^{n} \mathbb{I}(Y_i \in B^{J+1}_{(2t_1,2t_2+1)}) - \sum_{i=1}^{n} \mathbb{I}(Y_i \in B^{J+1}_{(2t_1+1,2t_2+1)}) & \\
         - \sum_{i=1}^{n} \mathbb{I}(Y_i \in B^{J+1}_{(2t_1,2t_2)})- \sum_{i=1}^{n} \mathbb{I}(Y_i \in B^{J+1}_{(2t_1+1,2t_2)})  & x \in B^{J+1}_{(2t_1,2t_2+1)} \\
         \qquad +3 \sum_{i=1}^{n} \mathbb{I}(Y_i \in B^{J+1}_{(2t_1,2t_2+1)}) - \sum_{i=1}^{n} \mathbb{I}(Y_i \in B^{J+1}_{(2t_1+1,2t_2+1)}) & \\
        - \sum_{i=1}^{n} \mathbb{I}(Y_i \in B^{J+1}_{(2t_1,2t_2)})- \sum_{i=1}^{n} \mathbb{I}(Y_i \in B^{J+1}_{(2t_1+1,2t_2)}) & x \in B^{J+1}_{(2t_1+1,2t_2+1)} \\
        \qquad - \sum_{i=1}^{n} \mathbb{I}(Y_i \in B^{J+1}_{(2t_1,2i_t+1)}) +3 \sum_{i=1}^{n} \mathbb{I}(Y_i \in B^{J+1}_{(2t_1+1,2t_2+1)}) & \\
        \end{cases}
    \end{equation}
    Finally now using the inductive assumption (that $\hat{f}_{n,J} \in \mathcal{H}_{J}$ with parameter $\pmb{\pi}_J$), we have that for $x \in B^{J}_{(t_1,t_2)}$, $\hat{f}_{n,J}(x) = \frac{2^{2J}}{n} \sum_{t=1}^{n} \mathbb{I} (Y_t \in B_{(t_1,t_2)}^{J})$ and therefore using Equations \ref{nwbInductiveConnect}, \ref{nwbWaveletPart}, and that $B_{(t_1,t_2)}^{J} = \bigcup_{(z_1,z_2) \in S_{(t_1,t_2)} } B_{(z_1,z_2)}^{J+1}$, we have that for $x \in B^{J}_{(t_1,t_2)}$
    \begin{equation}
        \hat{f}_{n,J+1}(x) = 2^{2(J+1)} \begin{cases} \frac{\sum_{i=1}^{n} \mathbb{I}(Y_i \in B^{J+1}_{(2t_1,2t_2)})}{n} = {\pi}_{J+1,(2t_1+1,2t_2+1)} & x \in B^{J+1}_{(2t_1,2t_2)} \\
        \frac{\sum_{i=1}^{n} \mathbb{I}(Y_i \in B^{J+1}_{(2i_1+1,2i_2)})}{n} = {\pi}_{J+1,(2i_1+2,2i_2+1)} & x \in B^{J+1}_{(2i_1+1,2i_2)} \\
        \frac{\sum_{i=1}^{n} \mathbb{I}(Y_i \in B^{J+1}_{(2i_1+1,2i_2+2)})}{n} = {\pi}_{J+1,(2i_1,2i_2+1)}& x \in B^{J+1}_{(2i_1,2i_2+1)} \\
        \frac{\sum_{i=1}^{n} \mathbb{I}(Y_i \in B^{J+1}_{(2i_1+1,2i_2+1)})}{n} = {\pi}_{J+1,(2i_1+2,2i_2+2)} & x \in B^{J+1}_{(2i_1+1,2i_2+1)}
        \end{cases}
    \end{equation}
    This argument applies for all $(t_1,t_2) \in \{0,1,2,\dots,2^{J}-1\}^2$, and \linebreak $[0,1)^2 = \bigcup_{(t_1,t_2) \in \{0,1,2,\dots,2^{J}-1\}^2 } B^{J}_{(t_1,t_2)}$. Thus we conclude that $\hat{f}_{n,J+1} \in \mathcal{H}_{J+1}$ with parameter $\pmb{\pi}_{J+1}$. This concludes the inductive argument. \\
\end{proof}

Recall $\bar{P}_n \in \mathcal{H}_{\ceil{\log_2(k_n)}}$ with parameter $\bar{\pmb{\pi}}$ satisfying 
\[
\bar{\pi}_{(t_1+1,t_2+1)} \propto  \sum_{i=1}^{n} \mathbb{I}(Y_i \in B_{(t_1,t_2)}^{\ceil{\log_2(k_n)}})+\alpha_{(t_1+1,t_2+1),b_n},
\]
for $(t_1,t_2) \in \{0,1,2,\dots,2^{\ceil{\log_2(k_n)}}-1\}^2$. Thus referring now to the general $d$ dimensional setting, $\bar{P}_n$ is a dyadic histogram with order $k_n^d$ bins and by Lemma \ref{nwbLemma}, $\ddot{P}_n$ is a dyadic histogram with order $2^{J(n)d}$ bins.
From the requirement in the proof of \citet[Theorem~6]{niles2022minimax} that $n \asymp 2^{J(n)(d+s)}$ where $0 < s < 1$ is the assumed holder regularity of the density to be estimated, the requirement on $J(n)$ is that $J(n) \leq \log_2(n^{1/(d+s)})$. In particular the number of bins in $\ddot{P}_n$ is $\asymp n^{d/(d+s)}$. This contrasts with Theorem \ref{frequentistDwasser} of this paper, where for $d < 2v$, $k_n = n^{1/2v}$ and the number of bins in $\bar{P}_n$ is $\asymp n^{d/2v}$. Hence, in the $d < 2v$ case ($(d=1,v \geq 1), (d=2,v=2),(d=3,v=2)$), since $2v > d+s$ for $0 < s < 1$, there is a polynomial factor reduction in the number of bins needed in Theorem \ref{frequentistDwasser} compared to in \cite{niles2022minimax}.

\bibliographystyle{chicago}
\bibliography{npq,manyQntl}

\end{document}